     \def\section{\@startsection{section}{1}%
     \z@{.7\linespacing\@plus\linespacing}{.5\linespacing}%
     {\bfseries
     \centering
     }}
     \def\@secnumfont{\bfseries}
\newcommand{\R}{\mathbb R}
\newcommand{\E}{\mathbb E}
\newcommand{\PP}{\mathbb P}
\newcommand{\ep}{\varepsilon}
\newcommand{\lp}{\left(}
\newcommand{\rp}{\right)}
\newcommand{\lc}{\left[}
\newcommand{\rc}{\right]}
\newcommand{\lcl}{\left\{}
\newcommand{\rcl}{\right\}}
\newcommand{\cexp}{2H} 
\newtheorem{theorem}{Theorem}[section]
\newtheorem{lemma}[theorem]{Lemma}
\newtheorem{prop}[theorem]{Proposition}
\theoremstyle{definition}
\newtheorem{defn}[theorem]{Definition}
\theoremstyle{remark}
\newtheorem{remark}{Remark}
\numberwithin{equation}{section}
\newcommand{\eg}{E_Z(\gamma)}
\newcommand{\ls}{\mathcal{L}_Z(x)}
\newcommand{\wt}[1]{\widetilde{#1}}
\newcommand{\leb}[1]{\text{Leb}(#1)}
\newcommand{\pix}[1]{\text{pix}(#1)}
\newcommand{\dlog}[1]{\text{Den}_{\log}\left( #1 \right)}
\newcommand{\dpix}[1]{\text{Den}_{pix} \left( #1 \right)}
\newcommand{\dhaus}[1]{\text{Dim}_{H} \left( #1 \right)}
\newcommand{\dpack}[1]{\text{dim}_P \left( #1 \right)}
\newcommand{\dpackprof}[2]{\text{dim}_{P, #1} \left( #2 \right)}
\newcommand{\dbox}[1]{\text{dim}_B \left( #1 \right)}
\newcommand{\dtheta}[1]{\text{dim}_{\theta} \left( #1 \right)}
\newcommand{\ds}{\displaystyle}
\newcommand{\chaus}[1]{\text{dim}_{H} \left(#1\right)}
\newcommand{\dlbox}[1]{\text{\underline{dim}}_{B} \left( #1 \right)}
\newcommand{\dubox}[1]{\overline{\text{dim}}_{B} \left( #1 \right)}
\newcommand{\dltheta}[1]{\text{\underline{dim}}_{\theta} \left( #1 \right)}
\newcommand{\dutheta}[1]{\overline{\text{dim}}_{\theta} \left( #1 \right)}
\newcommand{\dlthetaprof}[2]{\text{\underline{dim}}_{\theta, #1}  \left(#2\right)}
\newcommand{\duthetaprof}[2]{\overline{\text{dim}}_{\theta, #1} \left( #2  \right)}
\newcommand{\sap}[1]{\text{s}^\alpha\text{-p}  \left( #1 \right)}
\newcommand{\saP}[1]{\text{s}^\alpha\text{-P}  \left( #1 \right)}
\newcommand{\diam}[1]{\lvert #1\rvert}
\begin{document}
\title[Sojourn times]{Fractal dimensions of the Rosenblatt process}
\author[L. Daw]{Lara Daw}
\address{University of Luxembourg, Department of Mathematics, Luxembourg}
 \email{lara.daw@uni.lu}
\author[G. Kerchev]{George Kerchev}
\address{University of Luxembourg, Department of Mathematics, Luxembourg}
 \email{george.kerchev@uni.lu}

\begin{abstract}
The paper concerns the image, level and sojourn time sets associated with sample paths of the Rosenblatt process. We obtain results regarding the Hausdorff (both classical and macroscopic), packing and intermediate dimensions, and the logarithmic and pixel densities. As a preliminary step we also establish the time inversion property of  the Rosenblatt process, as well as some technical points regarding the distribution of $Z$.
\end{abstract}

\maketitle

\medskip\noindent
{\bf Mathematics Subject Classifications (2010)}: Primary 60G17, 60G18.

\medskip\noindent
{\bf Keywords:} Rosenblatt process, Image set, Level set, Sojourn times, Hausdorff dimension, Packing dimension, Intermediate dimension, Logarithmic density, Pixel density.

\allowdisplaybreaks

\section{Introduction}
The Rosenblatt process $Z = (Z_t)_{t \geq 0}$ is a stochastic process that is a limit of normalized sums of long-range dependent random variables. It belongs to the class of Hermite processes and is the simplest member that is non-Gaussian. It has continuous but nowhere differentiable paths and is selfsimilar of order $H \in (1/2, 1)$ with stationary increments.

The process $Z$, due to its self-similarity, can find applications across a multitude of fields like  internet traffic~\cite{chaurasia2019performance}, hydrology, and turbulence~\cite{sakthivel2018retarded, lakhel2019existence}. We refer the reader to~\cite{embrechts-maejima-2002} and~\cite{samorodnitsky-taqqu-1994} for a detailed review of the properties associated with self-similarity. In particular,  the Rosenblatt process is used in finance~\cite{torres-tudor-2009, stoyanov-etal-2019, fauth-tudor-2016} and statistical inference~\cite{leduc-etal-2011, dehling-rooch-taqqu-2013, nourdin-tran-2019}.

From a mathematical standpoint the process has received a lot of interest since its inception in~\cite{rosenblatt-1956}. Its distribution is not known in explicit form but was studied first in~\cite{albin-1998} and more recently in~\cite{maejima-tudor-2013} and~\cite{veillette-taqqu-2013}. There are three integral representations: in terms of time, the spectrum and on finite intervals, see~\cite{taqqu-2011}. There is also a wavelet representation~\cite{pipiras-2004} (see also the recent article~\cite{ayache-esmili-2020} for the wavelet representation of the generalized Rosenblatt process and its rate of convergence). From a statistical point of view, the value of the Hurst index $H$ is important for practical applications and various estimators exist, see~\cite{bardet-tudor-2010, tudor-viens-2009}.

In the present paper, we focus on the fractal properties of the random sets and measures determined by the sample paths of $Z$, i.e.,  if the underlying probability space is $(\Omega, \mathcal{F}, \PP)$, we study the function $Z(t) = Z_t( \omega)$, for a fixed $\omega \in \Omega$. Some (random) sets of interest are then:
\begin{align}
	& \label{defImgS}\text{Image set: } Z(E) \coloneqq \lcl Z(t) : t \in E \rcl; \\
	& \label{defGS} \text{Graph set: } Gr_Z(E) \coloneqq \lcl (t, Z(t) ) \in E \times \R : t \in E \rcl; \\
	&\label{defLS} \text{Level set: } \ls \coloneqq \lcl t \in \R_+ : Z(t) = x \rcl, x \in \R; \\
	& \label{defSS}\text{Sojourn set: } \eg \coloneqq \lcl t \in \R_+ : |Z(t) | \leq t^\gamma \rcl,  \gamma > 0;\\
	& \label{defInvS}\text{Inverse image: } Z^{-1} (E') \coloneqq \lcl t \in \R_+ : Z(t) \in E' \rcl,
\end{align}
where $E \subset \R_+$ and $E' \subset \R$ are  Borel sets. These sets , due to self-similarity property of $Z$, may look like a fractal, see, e.g., Figure~\ref{fig:sojourn}, for the sojourn set of the Rosenblatt process. In order to describe such sets quantitatively one can use a type of fractal dimension. 

\begin{figure}[H]
	\includegraphics[width=10cm]{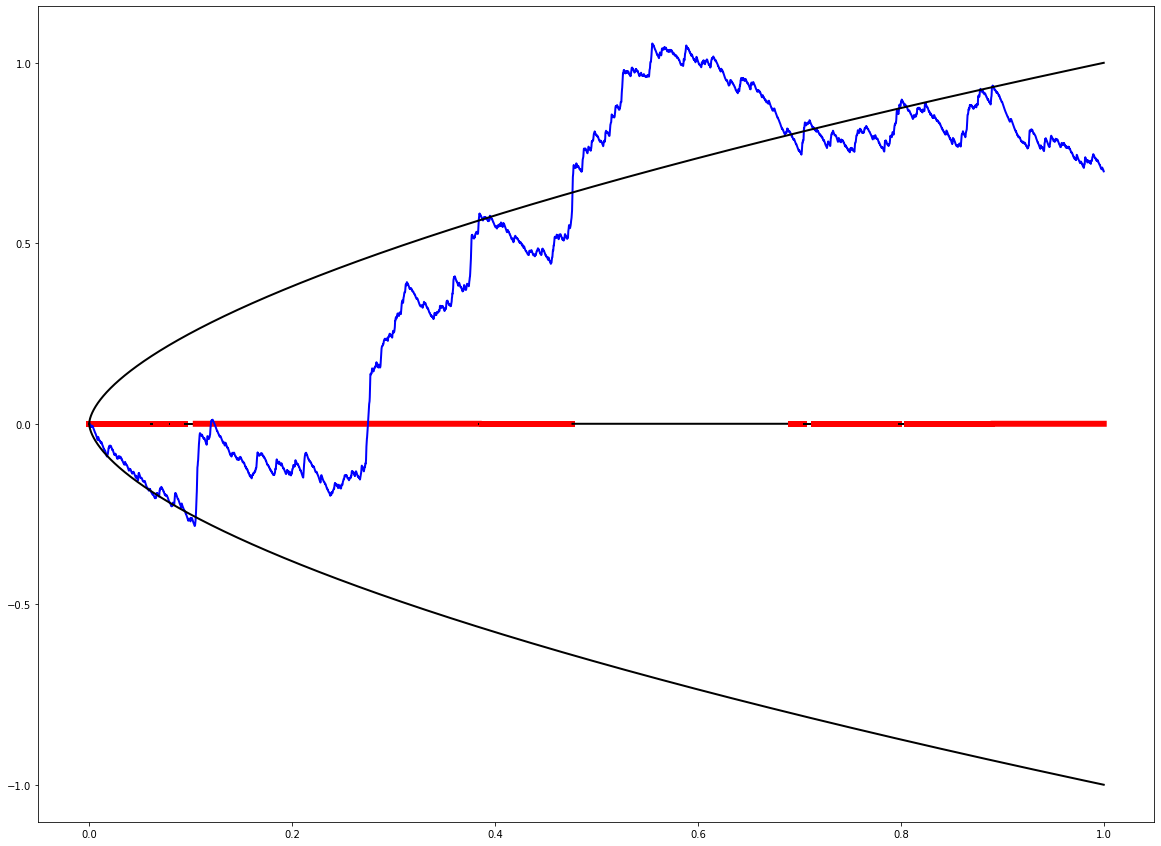}
	\caption{Simulation of a  Rosenblatt process of Hurst index $H = 0.6$. In red - the sojourn set $\eg$ for $\gamma = 0.6$. }
	\label{fig:sojourn}
\end{figure}

Fractal dimensions give you an intuition about the geometry of a set. Having identified some interesting random sets and possible ways to measure them, we note that such studies can be traced to the pioneering work of L\'evy~\cite{levy-1953} and Taylor~\cite{taylor-1953, taylor-1955, taylor-1967} on the sample path properties of the Brownian motion. We refer the reader to~\cite{sato-2013} and~\cite{xiao-2004} for surveys of such results for L\'evy and Markov processes respectively.

An important class of such dimensions reflects local properties of the set. One important example is the \emph{classical Hausdorff dimension}, which can be defined as follows using the Hausdorff content, see~\cite[Section 3.2]{falconer2004fractal}. For $E \subset \R$,

\begin{align}
	\label{eq:classical_hausdorff} \chaus E \coloneqq \inf \lcl s \geq 0 \, : \, \forall \epsilon>0,  \exists \mbox{  cover} \left\{ U_i\right\}_{i = 1}^\infty \mbox{ of } E, \mbox{ s.t. } \sum_{i =1}^\infty |U_i|^s \leq \epsilon \rcl,
\end{align}
where $|F|$ denotes the {\it diameter} of the set $F$. Moreover by imposing further restrictions on the sets in the cover $\lcl U_i \rcl$ one can recover the definitions of \emph{box dimension}. In particular, for $E \subset \R$, the \emph{lower box dimension} is given by: 
\begin{align}
\label{eq:lower-box-dim}  \dlbox{E} \coloneqq \inf  \lcl s \geq 0 : \begin{array}{c} \forall \epsilon>0,  \exists \mbox{  cover } \left\{ U_i\right\}_{i = 1}^\infty \mbox{ of } E, \mbox{ s.t. } \\ \lvert U_i \rvert = \lvert U_j \rvert~ \forall i,j \mbox{ and } \sum_{i =1}^\infty |U_i|^s \leq \epsilon \end{array} \rcl.
\end{align}
Similarly, we define the \emph{upper box dimension}: 
\begin{align}
\label{eq:upper-box-dim}  \dubox{E} \coloneqq \inf  \lcl s \geq 0 : \begin{array}{c} \forall \epsilon>0,  \exists \delta > 0, \forall \mbox{  cover } \left\{ U_i\right\}_{i = 1}^\infty \mbox{ of } E, \mbox{ s t. } \\ \lvert U_i \rvert \leq \delta, \, \lvert U_i \rvert = \lvert U_j \rvert~  \forall i,j \mbox{ and } \sum_{i =1}^\infty |U_i|^s \leq \epsilon \end{array} \rcl.
\end{align}
The box dimension $\dbox{E}$ is then given by the common value (if it exists) of  $\dlbox{E}$ and $\dubox{E}$. Next for $\theta \in [0, 1]$, the \emph{$\theta$-intermediate dimensions} $\dtheta{E}$   is a dimension that interpolate between the \emph{Hausdorff} and \emph{box dimensions} by increasing restriction on the relative sizes of covering sets as $\theta$ increases ($\delta^{1/\theta} \leq |U_i| \leq \delta$ for all $i$). In particular, one defines $\dltheta{E}$ and $\dutheta{E}$ similarly to $\dlbox{E}$ and $\dubox{E}$. Then $\dtheta{E}$ is the common value if it exists of $\dltheta{E}$ and $\dutheta{E}$.

One need not consider only covers for the set $E$. For example,  $\dbox{E}$ can be defined alternatively using coverings by small balls of equal radius (corresponding to $\dlbox{E}$) or using packings by disjoint balls of equal radius that are as dense as possible (corresponding to $\dubox{E}$), see~\cite[Section 3.4]{falconer2004fractal}. If the radii are allowed to differ the covering procedure corresponds to the classical Hausdorff dimension while the packing one is associated to the \emph{packing dimension} $\dpack{E}$. In linear programming the packing and covering problems are dual of each other and thus the packing dimension can be considered as the \emph{dual analogue} to the classical Hausdorff dimension. The precise definitions are delayed to Section~\ref{sec:image_set}. 

Other definitions of the packing and intermediate dimensions are possible by employing methods from potential theory. Thus, $\dpack{E}$, $\dltheta{E}$ and $\dutheta{E}$ can be expressed via capacities with respect to certain kernels, see~\cite{falconer1997packing, burrell2019projection}. This gives rise to \emph{packing and intermediate dimension profiles} - $\dpackprof{\alpha}{E}$, $\dlthetaprof{\alpha}{E}$ and $\duthetaprof{\alpha}{E}$  respectively. See ~\ref{sec:inter_dim} for the precise definitions.

All the dimensions in the discussion above pertain to local properties of the set. It is often the case, for instance in statistical physics, that one needs to quantify global properties of an infinite set. The simplest way of assessing the size of such a set is given by its (Lebesgue) density at infinity. In particular, we utilize the \emph{logarithmic density} $\dlog{E}$ and the \emph{pixel density} $\dpix{E}$ (the latter corresponding to the ``pixelated" image). Alternatively, one can use the \emph{macroscopic Hausdorff dimension} $\dhaus{E}$ introduced in~\cite{barlow-taylor-1989, barlow-taylor-1992} for  the study of the macroscopic properties of random walks. More recent applications can be found in the study of high peaks of solutions of the stochastic heat equation~\cite{khoshnevisan-kim-xiao-2017, khoshnevisan-xiao-2017}. Definitions of these concepts are provided in Section~\ref{sec:level_set}. A brief summary of all dimensions discussed can be seen in Table~\ref{table:dim}.

\begin{table}[h]
	\begin{center}
		\begin{tabular}{ |c|c|c|c |c | c|  }
			\hline 
			Dimension  & Name  & Cover & Size & Values & Limit \\
			\hline
			$\chaus{\cdot}$ &   \begin{tabular}{@{}c@{}} Classical \\ Hausdorff \end{tabular} & Covering & $ (0,  \delta]$ & $[0,1]$ & $\delta \to 0$ \\
			\hline
			$\dbox{\cdot}$ & Box & \begin{tabular}{@{}c@{}} Upper - Covering\\ Lower - Packing \end{tabular} & $\delta$ & $[0, 1]$ & $\delta \to 0$ \\ 
			\hline 
			$\dpack{\cdot}$ &   Packing & Packing & $(0,  \delta]$ & $[0,1]$ & $\delta \to 0$ \\
			\hline
			$\dtheta{\cdot} $ & Intermediate & & $\in (\delta^{1/\theta}, \delta)$  & $[0, 1]$ & $\delta \to 0$ \\ 
			\hline 
			\hline
			$\dlog{\cdot}$ & \begin{tabular}{@{}c@{}} Logarithmic\\ density \end{tabular}  & Interval & $[1, 2^n]$ & $[0,1]$ & $n \to \infty$ \\
			\hline
			$\dpix{\cdot} $ & \begin{tabular}{@{}c@{}} Pixel\\ density \end{tabular}& Interval* & $[1, 2^n]$  & $[0, 1]$ & $n \to \infty$ \\ 
			\hline
			$\dhaus{\cdot}$ &   \begin{tabular}{@{}c@{}} Macroscopic\\ Hausdorff \end{tabular} & \begin{tabular}{@{}c@{}} Collections of sets\\ in $[2^{n-1}, 2^n) $ \end{tabular} & $(0,  \delta]$ & $[0,1]$ & $n \to \infty$ \\
			\hline		
		\end{tabular}
		\medskip
		\caption{Overview of the types of fractal dimensions. For the pixel density the cover consists of the integer points in the interval at distance less than $1$ from $E$. }
		\label{table:dim}
	\end{center}
\end{table}
We also mention a few relations between the dimensions mentioned so far to give the reader some intuition:
\begin{align}
	\notag \chaus{E} & \leq \dlbox{E} \leq \dubox{E}; \quad \chaus{E}  \leq \dltheta{E} \leq \dutheta{E} \leq \dubox{E};  \\
	\notag \dpack{E} & \leq  \dubox{E}; \quad \dlog{E}  \leq \dpix{E}.
\end{align}

Before we list our main results, we outline what is known regarding fractional properties of sample paths of a Hermite process of rank $1$, i.e., the fractional Brownian motion. The fractional Brownian motion $X = (X_t)_{t \geq 0}$, like $Z$, is a selfsimilar stochastic process with stationary increments. Both processes, $X$ and $Z$, share the same covariance structure and are governed by a parameter $H$ (called Hurst parameter in both cases). Unlike the Rosenblatt process, the process $X$ is Gaussian and $H \in (0,1)$. See Table~\ref{table:fbm} for an overview of some fractal properties of sets associated with the sample paths of the fractional Brownian motion. 

\begin{table}[h]
	\begin{center}
		\begin{tabular}{ |c|c|c|c | }
			\hline 
			& $X(E)$  & $\mathcal{L}_X(x)$  & $E_X(\gamma)$   \\
			\hline
			$\dpack \cdot$ & $\frac{1}{H} \dpackprof{H}{E}$~\cite{xiao-1997}&  &1 \\ 
			\hline 
			$\dltheta \cdot $ &  $\frac{1}{H} \dlthetaprof{H}{E}$~\cite{burrell2020dimensions}   & &1\\
			\hline 
			$\chaus \cdot $ & \begin{tabular}{@{}c@{}} $\min \left(1, \frac{1}{H} \chaus{E}\right)$ \\ \cite{kahane-1985} \end{tabular} & $1 - H$~\cite{falconer2004fractal} &1 \\
			\hline 
			$\dhaus \cdot$  & &    $1 - H$~\cite{daw-2020}& $1 - H$~\cite{Nourdin-Peccati-Seuret-2018} \\ 
			\hline 
			$\dpix \cdot$ & & &  $\gamma + 1 - H$~\cite{Nourdin-Peccati-Seuret-2018} \\ 
			\hline 
			$\dlog \cdot$ & & &  $\gamma + 1 - H$~\cite{Nourdin-Peccati-Seuret-2018}  \\
			\hline 
			
		\end{tabular}
		\medskip
		\caption{Table of fractal dimensions and densities of random sets associated with the fractional Brownian motion with $\gamma \in [0, H)$.}
		\label{table:fbm}
	\end{center}
\end{table}

For completeness  we mention also some results regarding the graph and the inverse sets. If $X : \R^N \mapsto \R^d$ is a fractional Brownian sheet, it has been proved in~\cite{adler-1977} that, almost surely,  $\chaus{Gr_X([0,1]^N)}= \min \left\{N/H, N + (1 - H)d\right\}$. The  box dimension of the graph of the fractional Brownian sheet over a non degenerate cube $Q$ of $\R^N$ was determined in~\cite{kamont1995fractional}. Moreover, with probability 1, $\dbox{Gr_X\left(Q\right)}=N+1 - H$. Regarding the inverse set, the following holds: for $E $  a closed subset of $\R^d$, $\chaus{X^{-1}\lp E\rp}= N-Hd +\chaus{E}$ (see~\cite{monrad-pitt-1987}). We believe that analogous results can be established for the Rosenblatt process, but the sets in question are not the subject of the current paper.

Many of the results listed above rely on H\"older regularity conditions for the sample paths, and more precisely, for the local time of the process. Such properties have been established for stationary Gaussian processes, like the fractional Brownian motion, by Berman in~\cite{berman-1969}. His analytic approach, which is based on properties of the Fourier transform of the underlying process, has been adapted to the Rosenblatt setting in~\cite{shevchenko-2010} where existence of the local time of $Z$ was first established. H\"older regularity was then recovered in the recent paper~\cite{knsv-2020}. These new results now allow  to generalize some of the results in Table~\ref{table:fbm} for the Rosenblatt case.  See Table~\ref{table:rp}.

\begin{table}[h]
	\begin{center}
		\begin{tabular}{ |c|c|c|c | }
			\hline 
			& $Z(E)$  & $\ls$  & $\eg$  \\
			\hline
			$\dpack \cdot$ & $\frac{1}{H} \dpackprof{H}{E}$~\cite{shieh-xiao-2010}& $ 1-H$ &1  \\ 
			\hline 
			$\dltheta \cdot $ &  $\frac{1}{H} \dlthetaprof{H}{E}$ &$1-H$  & 1\\
			\hline 
			$\chaus \cdot $ & \begin{tabular}{@{}c@{}} $ \min \left(1, \frac{1}{H} \chaus{E}\right)$ \\ \cite{shieh-xiao-2010} \end{tabular} & $1-H$ &1  \\
			\hline 
			$\dhaus \cdot$  & &    $1 - H$ & $1 - H$   \\ 
			\hline 
			$\dpix \cdot$ & & &  $\gamma + 1 - H$ \\ 
			\hline 
			$\dlog \cdot$ & & &  $\gamma + 1 - H$  \\
			\hline 
			
		\end{tabular}
		\medskip
		\caption{Table of fractal dimensions and densities of random sets associated with the Rosenblatt process with $\gamma \in [0, H)$. }
		\label{table:rp}
	\end{center}
\end{table}

All results in Table~\ref{table:rp} but the ones for the dimensions of the image of the process $Z(E)$ are new. Our findings are collected in the following three propositions. First, for the image set we extend the results of~\cite{shieh-xiao-2010} to the intermediate dimensions setting, as in~\cite{burrell2020dimensions}:

\begin{theorem}\label{thm:images}
	Let  $\theta \in (0,1]$ and $E \subset \R^+$ be compact. Then almost surely:
	\begin{align}
		\dltheta {Z(E)}= \frac{1}{H} \dlthetaprof{H}{E},
	\end{align}
	and 
	\begin{align}
		\dutheta {Z(E)}= \frac{1}{H} \duthetaprof{H}{E},
	\end{align}
	where $\dlthetaprof{H}{\cdot}$ and $\dlthetaprof{H}{\cdot}$ are the lower and upper $\theta$-intermediate dimension profiles respectively. For the precise techinical definitions of these two objects see~\eqref{eq:dltp} and~\eqref{eq:dutp} in Section~\ref{sec:inter_dim}.  
\end{theorem}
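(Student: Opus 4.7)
The plan follows the approach of Burrell~\cite{burrell2020dimensions} for fractional Brownian motion, suitably adapted to the non-Gaussian Rosenblatt setting. There are four inequalities to prove in total: matching upper and lower bounds for each of $\dltheta{Z(E)}$ and $\dutheta{Z(E)}$.

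For the upper bounds, I would exploit H\"older continuity of the sample paths of $Z$: for any $\alpha \in (0, H)$, almost surely there is a random constant $C$ such that $|Z(t) - Z(s)| \leq C|t-s|^\alpha$ uniformly for $s, t \in E$. Any admissible $\theta$-intermediate cover $\{U_i\}$ of $E$ at scale $\delta$ (so $\delta^{1/\theta} \leq \diam{U_i} \leq \delta$) therefore induces a cover $\{Z(U_i \cap E)\}$ of $Z(E)$ whose diameters lie in the interval $[C\delta^{\alpha/\theta}, C\delta^\alpha]$. Translating these covering estimates into the capacity-theoretic formulation of the $\theta$-intermediate dimension profile (see Section~\ref{sec:inter_dim}), one obtains a bound of the form $\dutheta{Z(E)} \leq \frac{1}{\alpha}\duthetaprof{\alpha}{E}$; letting $\alpha \uparrow H$ and using continuity of the profile in its parameter yields the desired upper bound. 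The argument for the lower intermediate dimension is identical, modulo the usual switch from $\limsup$ to $\liminf$ in the covering functional.

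For the lower bounds, I would employ the potential-theoretic characterization of $\dlthetaprof{H}{E}$ (respectively $\duthetaprof{H}{E}$) as a supremum of exponents $s$ for which one can find a Borel probability measure $\mu$ on $E$ with suitably controlled $(s, \theta, H)$-energy. Given such a $\mu$, push it forward to $\nu = Z_\ast \mu$ on $Z(E)$. Taking expectation and applying density estimates of the form $p_{Z(t) - Z(s)}(0) \leq C|t-s|^{-H}$ for the Rosenblatt increments, one aims at an inequality of the type
\begin{align}
\E \int\!\!\int \phi^{s/H, \theta}_{\delta^H}\bigl(|Z(t) - Z(s)|\bigr)\, d\mu(t)\, d\mu(s) \leq C \int\!\!\int \phi^{s, \theta, H}_{\delta}(|t - s|)\, d\mu(t)\, d\mu(s),
\end{align}
where $\phi$ denotes the appropriate kernel defining the energy. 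Bounding the right-hand side by the hypothesized energy of $\mu$ and invoking Fatou's lemma together with a standard Borel--Cantelli/Frostman argument produces the matching lower bounds $\dltheta{Z(E)} \geq \frac{1}{H}\dlthetaprof{H}{E}$ and $\dutheta{Z(E)} \geq \frac{1}{H}\duthetaprof{H}{E}$ almost surely.

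The principal obstacle is the density estimate $p_{Z(t) - Z(s)}(0) \leq C|t-s|^{-H}$ needed for the lower bound. Since $Z$ is non-Gaussian, this cannot be obtained by a direct moment computation as in~\cite{burrell2020dimensions}; instead, one must rely on the Fourier-analytic approach to Rosenblatt densities initiated by Shevchenko~\cite{shevchenko-2010} and refined in~\cite{knsv-2020}, together with the time-inversion property of $Z$ announced in the abstract, both of which are precisely the preliminary results that the earlier sections of the paper are devoted to establishing. A secondary technical point is controlling the $\delta^{1/\theta}$ lower bound on diameters in the covering formulation of intermediate dimensions, which requires transferring mass distributions back and forth between $E$ and $Z(E)$ in a manner compatible with the size constraint; the relevant Frostman-type device already appears in~\cite{burrell2019projection, burrell2020dimensions} and should carry over with only cosmetic changes once the Rosenblatt density bound is in place.
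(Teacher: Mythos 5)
Your overall strategy coincides with the paper's: combine the two theorems of Burrell~\cite{burrell2020dimensions} --- the H\"older-image upper bound $\dutheta{f(E)} \leq \frac{1}{\alpha}\duthetaprof{\alpha}{E}$ and the probabilistic lower bound driven by an estimate on $\PP\left(|Z_s - Z_t| \leq r\right)$ --- and then reconcile the exponents. Two points in your write-up need correcting.

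First, you substantially overestimate the difficulty of the key probabilistic input, and you point at the wrong tools. The bound $\PP\left(|Z_s-Z_t|\leq r\right) \leq C\, r\,|t-s|^{-H}$ does not require the Fourier-analytic machinery of~\cite{knsv-2020}, and the time-inversion property plays no role in this theorem (the paper uses it only for the Blumenthal $0$--$1$ law arguments in the level-set and sojourn-set sections). By stationarity of increments and self-similarity, $Z_t - Z_s \overset{d}{=} |t-s|^H Z_1$, so $\PP\left(|Z_s-Z_t|\leq r\right) = \PP\left(|Z_1| \leq r|t-s|^{-H}\right)$, and the estimate follows from boundedness of the continuous, unimodal density of $Z_1$ (Proposition~\ref{prop:rosen_density}\ref{prop:unimodal}, resting on known results of Veillette--Taqqu and Maejima--Tudor). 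The paper identifies this bound with $c\,\phi_{r^{1/H},\theta}^{H,H}(s-t)$ and feeds it directly into Burrell's Theorem 3.3; no pushforward-energy computation needs to be redone.

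Second, your upper-bound sketch as literally described does not deliver the profile. Pushing a cover $\{U_i\}$ of $E$ with $\delta^{1/\theta}\leq |U_i|\leq\delta$ through an $\alpha$-H\"older map and inflating degenerate images to the minimal admissible size only yields $\dutheta{Z(E)} \leq \frac{1}{\alpha}\dutheta{E} = \frac{1}{\alpha}\duthetaprof{1}{E}$, which in general strictly exceeds $\frac{1}{\alpha}\duthetaprof{\alpha}{E}$ and therefore fails to match your lower bound. The profile upper bound is intrinsically a capacity statement and must be proved inside the potential-theoretic formulation; this is exactly Burrell's Theorem 3.1, which the paper cites as a black box. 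Finally, since the Rosenblatt paths are only $(H-\epsilon)$-H\"older, the two lemmas give $\frac{1}{H}\dlthetaprof{H}{E} \leq \dltheta{Z(E)} \leq \frac{1}{H-\epsilon}\dlthetaprof{H-\epsilon}{E}$; the paper closes this gap using monotonicity of the profiles in the parameter $m$ and letting $\epsilon\to 0$, which is the precise form of your appeal to ``continuity of the profile in its parameter.''
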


Then, we study the proportion of time spent by a stochastic process in a given region.  We describe the size of  the level sets $\ls$ in terms of intermediate dimensions and macroscopic Hausdorff dimension. The following holds:

\begin{theorem}\label{thm:levels} For $E \subset \R$ and $\theta \in [0,1]$, let   $\dtheta{E}$ and $\dhaus{E}$ denote the $\theta$-intermediate and macroscopic Hausdorff dimensions of $E$.  Then, for any $x \in \R$ and $0 < \ep < 1$,
	\begin{align}\label{eq:levelset_classical}
		\forall x \in \R, \mathbb{P} \left(\dtheta {\ls \cap [\ep, 1]} = 1 - H \right)  = 1.
	\end{align}
	And,
	\begin{align} \label{eq:levelset_packing}
		\forall x \in \R, \mathbb{P}(\dpack{\ls \cap [\ep, 1]} = 1-H)=1.
	\end{align}
	Moreover, 
	\begin{align} \label{eq:levelset_as}
		\mathbb{P}(\forall x\in\mathbb{R}:\,\dhaus{\ls} = 1-H)=1.
	\end{align}

\end{theorem}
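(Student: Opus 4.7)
The proof splits naturally into two parts: the local assertions \eqref{eq:levelset_classical}--\eqref{eq:levelset_packing} for each fixed $x$, and the macroscopic statement \eqref{eq:levelset_as} uniformly in $x$. Throughout, the key analytic inputs are the existence and joint H\"older continuity of the local time $L(x,I)$ of $Z$ in both variables, as established in \cite{knsv-2020}, together with self-similarity and $(H-\eta)$-H\"older sample-path regularity of $Z$.

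For the local assertions, I would first prove the two-sided bound $\chaus{\ls \cap [\ep,1]} = \dubox{\ls \cap [\ep,1]} = 1-H$ a.s.; the identities \eqref{eq:levelset_classical} and \eqref{eq:levelset_packing} then follow immediately from the sandwich inequalities $\chaus{E}\leq\dltheta{E}\leq\dutheta{E}\leq\dubox{E}$ and $\chaus{E}\leq\dpack{E}\leq\dubox{E}$. The upper bound on $\dubox{\cdot}$ follows from the standard occupation-measure argument: $(H-\eta)$-H\"older continuity of $Z$ forces every length-$\delta$ interval meeting $\ls$ to have image contained in a window of size $C\delta^{H-\eta}$ about $x$, whence their total length is controlled via $L(x,[\ep,1])$, giving $\dubox{\ls \cap [\ep,1]}\leq 1-H+\eta$ and thus the desired bound after $\eta\downarrow 0$. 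The matching Hausdorff lower bound applies Frostman's lemma to the measure $L(x,\cdot)$ on $[\ep,1]$, which is supported on $\ls\cap[\ep,1]$; the standard sufficient condition reduces to the integrability of
\begin{align*}
\iint_{[\ep,1]^2}|s-t|^{-\alpha}\,p_{Z(s),Z(t)}(x,x)\,ds\,dt \quad \text{for every } \alpha<1-H,
\end{align*}
which in turn follows from the non-Gaussian two-point density bound $p_{Z(s),Z(t)}(x,x) \leq C|t-s|^{-H}$ deducible from the density estimates in~\cite{knsv-2020, veillette-taqqu-2013}.

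The macroscopic assertion \eqref{eq:levelset_as} requires a single almost-sure event valid for every $x\in\R$, and it is natural to exploit the self-similarity $Z(2^n s)\stackrel{d}{=}2^{nH}Z(s)$, which reduces each dyadic annulus $A_n=[2^{n-1},2^n)$ to a unit-scale problem via $t\mapsto t/2^n$, $x\mapsto x/2^{nH}$. For the upper bound $\dhaus{\ls}\leq 1-H$, the occupation-measure counting from the previous step applied on $A_n$, combined with the scaling identity $\sup_y L(y,A_n) \lesssim 2^{n(1-H)}\xi_n$ where $\xi_n\stackrel{d}{=}\xi_0$ and $\xi_0<\infty$ a.s.\ by~\cite{knsv-2020}, gives a uniform-in-$x$ bound $\mu^n_\rho(\ls)\lesssim \xi_n\,2^{n(1-H-\rho)}$ for covers of scale $1$; a Borel--Cantelli argument controlling the $\xi_n$'s then ensures the series $\sum_n \mu^n_\rho(\ls)$ converges a.s.\ for every $\rho>1-H$, simultaneously in $x$.

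The hard part is the matching lower bound $\dhaus{\ls}\geq 1-H$ uniformly in $x$. Following the scheme from~\cite{Nourdin-Peccati-Seuret-2018, daw-2020}, I would use the rescaled local time on each $A_n$ as a Frostman-type mass distribution on $\ls\cap A_n$, the key input being a macroscopic Frostman bound of the form $L(x,U)\lesssim 2^{n(1-H)}(|U|/2^n)^{1-H-\eta}$ for $U\subset A_n$ of length at least $1$. After the self-similar rescaling this reduces to a uniform H\"older regularity bound in time for the local time of $Z$ on $[1/2,1)$, which is a direct consequence of the joint H\"older regularity in $(x,I)$ from~\cite{knsv-2020}; the uniformity in $x\in\R$ is then obtained by discretizing $x$ on a sufficiently fine $n$-dependent net of the relevant range $[-2^{nH(1+\ka)},2^{nH(1+\ka)}]$ (outside which $\ls\cap A_n$ is a.s.\ empty by a law-of-the-iterated-logarithm-type argument) and interpolating via H\"older continuity in the space variable, with a standard Borel--Cantelli step absorbing the exceptional sets. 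Combining this with the fact that $L(x,A_n)>0$ for infinitely many $n$ a.s.\ uniformly in $x$, the mass-distribution principle delivers $\dhaus{\ls}\geq 1-H-\eta$, and letting $\eta\downarrow 0$ concludes the proof.
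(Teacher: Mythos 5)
Your treatment of the local statements \eqref{eq:levelset_classical}--\eqref{eq:levelset_packing} is correct but takes a genuinely different route from the paper. For the upper bound the paper runs a probabilistic first-moment covering argument (covering $[\ep,1]$ by $\lceil n^{1/H}\rceil$ intervals, bounding $\PP(x\in Z(R_{n,\ell}))$ via the oscillation tail estimate of Proposition~\ref{prop:sup-tail} and the continuity of the density of $Z_1$), whereas you use the deterministic occupation-measure count $N(\delta)\delta\lesssim \delta^{H-\eta}\sup_y L(y,[\ep,1])$; both give $\dubox{\ls\cap[\ep,1]}\le 1-H$, and yours is arguably more elementary once one invokes $\sup_y L(y,I)<\infty$ from~\cite{knsv-2020}. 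For the lower bound the paper uses Berman's criterion (H\"older continuity of $L$ in the set variable implies $\chaus{\ls}\ge \gamma$), while you use a Frostman energy estimate requiring the two-point density bound $p_{Z_s,Z_t}(x,x)\le C_\ep|t-s|^{-H}$ on $[\ep,1]^2$; that bound is indeed extractable from the characteristic-function estimates of~\cite{knsv-2020} (Lemma~\ref{lem:part_eval} with $n=2$), so this is a legitimate alternative. Both routes share the same unaddressed caveat about the event $\{L(x,[\ep,1])>0\}$.

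The macroscopic lower bound, however, has a genuine gap. The mass-distribution principle at macroscopic scale (Lemma~\ref{LemmaHD} in the paper) does \emph{not} conclude from ``$L(x,S_n)>0$ for infinitely many $n$''; it requires the \emph{divergence of the weighted series} $\sum_n L(x,S_n)2^{-n\alpha}$ for $\alpha$ up to $1-H$. Since each term $L(x,S_n)2^{-n(1-H)}$ is, by self-similarity, distributed like $L(x2^{-nH},[1/2,1])$, positivity of infinitely many terms is strictly weaker than divergence of their sum, so your stated sufficient condition does not deliver $\dhaus{\ls}\ge 1-H-\eta$. Worse, proving the correct condition --- $\PP(\forall x:\ \sum_n L(x,S_n)2^{-n\alpha}=\infty)=1$ --- is the actual crux of the paper's argument: it needs (a) a positivity step $\E[L(0,[1/2,1])]>0$ together with the spatial H\"older estimate of Proposition~\ref{prop:sup_holder_lt} to get $\PP(\wt F_\infty^b=\infty)>0$ uniformly over $x$ in compacts, and (b) a zero-one law upgrade, which in turn relies on the time-inversion property $\tilde Z_t=t^{2H}Z_{1/t}$ (Proposition~\ref{prop:inverse_time}, itself a new result proved via the spectral representation) to realize $\{\wt F_\infty^b=\infty\}$ as a germ-field event of a Brownian filtration and apply Blumenthal's law. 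Your proposal contains no analogue of either step; the net-plus-interpolation device you describe handles uniformity in $x$ but not the divergence itself, and even your weaker ``infinitely often'' claim would already require a zero-one argument you do not supply. To repair the proof you would need to replace ``$L(x,A_n)>0$ i.o.'' by the divergence of $\sum_n L(x,S_n)2^{-n\alpha}$ and then supply the positivity-plus-Blumenthal machinery (or an equivalent) to show that divergence holds almost surely, simultaneously for all $x$.
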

We believe that the same uniform result holds for classical Hausdorff dimension but we only prove the pointwise one. Finally, we establish the results for the sojourn times $\eg$:

\begin{theorem}\label{thm:sojourn}  For $E \subset \R$, let   $\dpix{E}$ and $\dlog{E}$ denote the pixel and logarithmic densities of $E$. Then, for all $\gamma \in [0,H]$,  
	\begin{align}
		\label{eq:densities}\dpix{\eg}= \dlog{\eg}= \gamma + 1-H, \quad \mbox{a.s.}
	\end{align} 
	Moreover, 
	\begin{align} \label{eq:haus_as}
		\dhaus \eg = 1-H \quad \mbox{a.s.}
	\end{align}
\end{theorem}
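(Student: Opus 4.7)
The plan is to split Theorem~\ref{thm:sojourn} into the two density identities in~\eqref{eq:densities} and the macroscopic Hausdorff identity~\eqref{eq:haus_as}, and to handle each by combining the self-similarity $Z_{ct}\stackrel{d}{=}c^H Z_t$, the distributional estimates on $Z$ announced in the abstract, the H\"older regularity of the local time of $Z$ established in~\cite{knsv-2020}, and the level set result~\eqref{eq:levelset_as} already furnished by Theorem~\ref{thm:levels}. The overall strategy mirrors the one used for fractional Brownian motion in~\cite{Nourdin-Peccati-Seuret-2018}, with Gaussian-specific steps replaced by their Rosenblatt analogues.

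For the density identities, the key one-point computation uses self-similarity to rewrite
\[
\mathbb{P}\bigl(|Z_t|\leq t^\gamma\bigr)=\mathbb{P}\bigl(|Z_1|\leq t^{\gamma-H}\bigr)\asymp t^{\gamma-H},
\]
the last step relying on the boundedness of the density of $Z_1$ near the origin. Fubini then gives $\mathbb{E}[\mathrm{Leb}(\eg\cap[1,2^n])]\asymp 2^{n(\gamma+1-H)}$, which via Markov and Borel--Cantelli along dyadic scales yields $\dlog{\eg}\leq\gamma+1-H$ almost surely; the pixel upper bound follows by thickening $\eg$ through the H\"older continuity of $Z$ (so that each integer pixel within distance $1$ of $\eg$ forces $|Z_k|\leq k^\gamma+C$) and applying the same one-point bound. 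The matching lower bound comes from the second moment
\[
\mathbb{E}\bigl[\mathrm{Leb}(\eg\cap[1,2^n])^2\bigr]=\iint_{[1,2^n]^2}\mathbb{P}\bigl(|Z_s|\leq s^\gamma,\ |Z_t|\leq t^\gamma\bigr)\,ds\,dt,
\]
a Paley--Zygmund argument on each dyadic annulus, and the general inequality $\dpix{\cdot}\geq\dlog{\cdot}$ recalled in the introduction.

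For the macroscopic Hausdorff dimension, the lower bound $\dhaus{\eg}\geq 1-H$ is immediate from the containment $\mathcal{L}_Z(0)\subset\eg$ and~\eqref{eq:levelset_as}. For the matching upper bound I would work on one dyadic annulus $A_n=[2^{n-1},2^n)$ at a time. Self-similarity identifies $\eg\cap A_n$ in distribution with $2^n$ times the set $\{u\in[1/2,1):|\widetilde Z_u|\leq \ep_n u^\gamma\}$ where $\ep_n=2^{n(\gamma-H)}\to 0$, which by H\"older continuity of $\widetilde Z$ is contained in an $\ep_n^{1/H}$-neighborhood of $\mathcal{L}_{\widetilde Z}(0)\cap[1/2,1)$. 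Since this level set has classical Hausdorff dimension $1-H$ (from the proof of~\eqref{eq:levelset_classical}), one can cover the neighborhood by $O(\ep_n^{-(1-H)/H})$ intervals of size $\ep_n^{1/H}$; unscaling produces covers of $\eg\cap A_n$ whose contribution to the Barlow--Taylor functional $\nu_n^s(\eg)$ is of order $\ep_n^{(s-(1-H))/H}$, summable over $n$ for every $s>1-H$, hence $\dhaus{\eg}\leq 1-H$.

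The principal technical obstacle is the two-point bound behind the second moment. For fBm local nondeterminism produces a sharp Gaussian joint density bound, whereas for $Z$ the analogous control must be extracted from the H\"older regularity of the local time in~\cite{knsv-2020} together with the one-point distributional estimates used for the first moment. A case split according to whether $|t-s|$ is small or large compared with $s^{\gamma/H}$ (respectively the regime where the two constraints are strongly correlated and the regime where they essentially decouple after conditioning on $Z_s$) should deliver a bound of order $(st)^{\gamma-H}$, after which the Paley--Zygmund concentration argument closes the proof routinely.
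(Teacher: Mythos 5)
Your outline for the densities matches the paper's skeleton (one-point estimate, Fubini, Borel--Cantelli for the upper bound; first and second moments plus Paley--Zygmund for the lower bound; $\dlog{\cdot}\leq\dpix{\cdot}$ to close the loop), but it stops exactly where the hardest step begins. Paley--Zygmund only yields $\PP\bigl(S_\gamma([0,2^n])\geq c\,2^{n(\gamma+1-H)}\bigr)\geq c'>0$ for each $n$; since $\dlog{\eg}$ is a $\limsup$, you must show these events occur infinitely often \emph{almost surely}, and the dyadic blocks are far from independent for a long-range dependent process. The paper resolves this by proving a time-inversion property ($t^{2H}Z_{1/t}$ is again Rosenblatt, Proposition~\ref{prop:inverse_time}), converting the tail event at infinity into a germ event at $0$, and invoking Blumenthal's $0$--$1$ law through the Brownian filtration~\eqref{filtrationB}; nothing in your proposal plays this role. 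Two smaller issues in the same part: the second-moment bound you defer as ``the principal technical obstacle'' is handled in the paper without any local nondeterminism, simply because $(Z_u,Z_v)$ has a bounded continuous joint density (Proposition~\ref{prop:rosen_density}(ii)), so $\PP(|Z_u|\leq a,\,|Z_v|\leq b)\lesssim ab$; and your pixel upper bound cannot rely on a pathwise H\"older constant uniform in $k$ --- the oscillation of $Z$ on $[k-1,k+1]$ is not bounded by a deterministic $C$ uniformly over $k$, and the paper instead controls it probabilistically via the exponential tail of Proposition~\ref{prop:sup-tail} (Lemma~\ref{lem:max_As}).

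For $\dhaus{\eg}$ the lower bound via $\mathcal{L}_Z(0)\subset\eg$ and~\eqref{eq:levelset_as} is exactly the paper's argument, but your upper bound rests on a false inclusion: $|\widetilde Z_u|\leq\ep_n$ does not force $u$ to lie within $\ep_n^{1/H}$ of a zero of $\widetilde Z$ (the path can hover at height $\ep_n/2$ on a long interval with no nearby zero); H\"older continuity gives only the reverse implication. Even granting the inclusion, covering the level set at the single scale $\ep_n^{1/H}$ by $O(\ep_n^{-(1-H)/H})$ intervals is a box-counting statement that would have to hold summably over $n$, which an almost-sure dimension identity for a fixed path does not provide. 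The paper avoids all of this: it tiles $S_n$ by intervals $I_{n,i}$ of length $2^{n\gamma/H}$ (chosen so that the typical oscillation matches the threshold $t^\gamma$), bounds $\PP(\eg\cap I_{n,i}\neq\emptyset)$ by the same Lemma~\ref{lem:max_As}, and shows $\sum_n\E[\nu_\rho^n(\eg)]<\infty$ for every $\rho>1-H+\eta$.
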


To fill the missing entries in Table~\ref{table:rp} one needs new techniques. In particular, the macroscopic Hausdorff dimension and the two densities of the image set $Z(E)$ should depend on the fractional properties of $E$ (in particular should be $0$ if $E$ is bounded). However, intuition regarding this relation is missing. Regarding, the level set $\mathcal{L}_Z(x)$, the approach for the macroscopic Hausdorff dimension does not translate since the key result (Lemma~\ref{LemmaHD}) is an artifact of the definition of $\text{Dim}_H$.   

The authors believe that many of the results above can be extended to some generalizations of the Rosenblatt process, for instance, when the time and space sets are $N$ and $d$ dimensional, or when the Hurst index is a function of time, as in~\cite{shevchenko-2010}. To ease the presentation only the case $N = d = 1$ and $H \in (1/2, 1)$ - fixed is considered.  However, establishing the results for Hermite processes of rank above $2$ requires new techniques and is beyond the scope of the current paper. In particular, the Berman analytic approach relies on a ``good'' representation for the Fourier transform of the process and this is not known for Hermite processes of higher rank.

The structure of the paper is as follows. The three main results listed above are established in Sections~\ref{sec:image_set}-\ref{sec:sojourn_set}. Some necessary technical  properties of the Rosenblatt process are reviewed and proved in Section~\ref{sec:rosenblatt}.

\section{Properties of the Rosenblatt process}\label{sec:rosenblatt}

We start this section with some basic properties of the stochastic process $Z$. As mentioned in the introduction, there exist a few integral representations of $Z$. For our purposes, of special interest is the spectral representation (see~\cite{taqqu-2011} and~\cite{Dobrushin-Major-1979}):
\begin{align}
	\label{eq:rosen_spectral} Z_t^H = C(H) \int_{\R^2} \frac{e^{i (x + y)t } - 1}{i (x + y)}  Z_G(dx) Z_G(dy),
\end{align} 
where the double Wiener-Ito integral is taken over $x \neq \pm y$ and $Z_G(dx)$ is a complex-valued random white noise with control measure $G$ satisfying $G(tA) = t^{1 - H} G(A)$ for all $t \in \R$ and $G(dx) = |x|^{-H} dx$. The constant $C(H)$ in~\eqref{eq:rosen_spectral} is such that  
\begin{align}
	\notag \E[ Z_t^2 ] = t^{2H} \quad \text{ and } \quad \E[ Z_t Z_s ] = \frac{1}{2} \lp t^{2H} + s^{2H} - |t - s|^{2H} \rp,
\end{align}
for all $s, t \geq 0$.

\begin{remark}
	Note that in the notation of~\cite{taqqu-2011}, $Z_G(dx) = |x|^{-H/2} d\hat{B}(x)$, with $(B(t))_{t \in \R}$ the Brownian motion and $d\hat{B}(x)$ is viewed as the complex-valued Fourier transform of $dB(x)$. For more details, see~\cite{Taqqu-1979}.
\end{remark}

It is known (see \cite{tudor2008analysis}) that the Rosenblatt process has the following properties: 
\begin{enumerate}
	\item[\textbf{(1)}] \textbf{self-similarity: } $Z$ is $H$-self-similar; that is, the processes $ \left\{Z_{ct}, \, t \geq 0 \right\}$ and $\left\{c^HZ_{t}, \, t \geq 0 \right\}$ have the same distribution.
	\item[\textbf{(2)}] \textbf{stationary increments: } $Z$ has stationary increments; that is, the distribution of the process $\left\{Z_{t+s}- Z_s, \, t \geq 0\right\}$ does not depend on $s \geq 0$.
	\item[\textbf{(3)}] \textbf{continuity:} the trajectories of the Rosenblatt process $Z$ are $\delta$-H\"older continuous for every $\delta< H$. 
\end{enumerate}
We will mention one more property that will be needed in our proofs, and is a consequence of the finite time interval representation~\cite[Section 7.3]{taqqu-2011} of the Rosenblatt process. The natural filtration associated to a Rosenblatt process is Brownian, i.e., there is  a Brownian motion $(B_t)_{t\geq 0}$ defined on the same probability space than $Z$ such that its filtration satisfies 
\begin{align}
	\label{filtrationB}
	\sigma\left\{Z_s \, : \, s \leq t\right\} \subset \sigma\left\{B_s \, : \, s \leq t\right\},
\end{align}
for all $t>0$.
\\ Moreover, by~\cite[Theorem 1.1]{maejima-tudor-2013}, for any $d \geq 1$ and $t_1, \ldots, t_d \geq 0$,
\begin{align}
	\label{eq:repn_thorin} (Z_{t_1}, \ldots , Z_{t_d} ) \overset{d}{=} \lp \sum_{n=1}^\infty \lambda_n(t_1) (\ep_n^2 - 1), \ldots, \sum_{n=1}^\infty \lambda_n(t_d) (\ep_n^2 - 1) \rp,
\end{align}
where $( \ep_n)_{n \geq 1}$ are i.i.d $\mathcal{N}(0,1)$ random variables and $(\lambda_n(t))_{n \geq 1}$ are the (real) eigenvalues of a self-adjoint Hilbert-Schmidt operator associated with the process $Z$ (see~\cite{Dobrushin-Major-1979}).

For our analysis a few properties of the density for the joint process $(Z_{t_1}, Z_{t_2})$ are needed. Using techniques from~\cite{knsv-2020} we can establish the following:

\begin{prop}\label{prop:rosen_density} \quad
	\begin{enumerate}[label=(\roman*)]
		\item The probability density function $f : \mathbb{R} \to \mathbb{R}_+$ of $Z_1$ is  continuous and $f(x) > 0$ for $x \geq 0$.    \label{prop:unimodal} 
		\item For every $t_1, \ldots, t_n \geq 0$, the vector $(Z_{t_1}, \ldots, Z_{t_n})$ has a continuous density.   \label{prop:cont_density}
	\end{enumerate}
	
\end{prop}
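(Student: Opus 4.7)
The strategy is to derive both claims from a Fourier inversion argument applied to the characteristic function of $(Z_{t_1}, \ldots, Z_{t_n})$, with the positivity part of~(i) treated separately by a direct convolution argument. Starting from the spectral representation~\eqref{eq:rosen_spectral} and extending~\eqref{eq:repn_thorin}, I would write any linear combination $\sum_{j} u_j Z_{t_j}$, with $u \in \R^n$, as a second Wiener--It\^o integral $I_2(K_u)$ with symmetric kernel $K_u = \sum_j u_j K_{t_j}$, and then diagonalise the associated self-adjoint Hilbert--Schmidt operator to obtain
\begin{align*}
\sum_{j=1}^n u_j Z_{t_j} \stackrel{d}{=} \sum_{k \geq 1} \mu_k(u)\bigl(\epsilon_k^2 - 1\bigr),
\end{align*}
with $(\epsilon_k)$ i.i.d.\ $\mathcal{N}(0,1)$ and $(\mu_k(u))$ the (real) eigenvalues of that operator. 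Independence of the $\epsilon_k$ then produces the explicit formula
\begin{align*}
\phi(u) := \E\!\left[e^{i \langle u,(Z_{t_1},\ldots,Z_{t_n})\rangle}\right] = \prod_{k \geq 1} \frac{e^{-i \mu_k(u)}}{\sqrt{1 - 2 i \mu_k(u)}}, \qquad |\phi(u)| = \prod_{k \geq 1}\bigl(1 + 4\mu_k(u)^2\bigr)^{-1/4}.
\end{align*}

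Next I would establish continuity of the density by showing $\phi \in L^1$ and invoking Fourier inversion. For part~(i), specialise to $n = 1$, $t_1 = 1$: the eigenvalues $\lambda_k := \mu_k(1)$ are strictly positive by positive-definiteness of $K_1$ and satisfy $2 \sum_k \lambda_k^2 = \mathrm{Var}(Z_1) = 1$, so keeping any $N$ factors gives $|\phi_{Z_1}(\xi)| \leq C_N (1 + |\xi|)^{-N/2}$, hence $\phi_{Z_1} \in L^1(\R)$ and Fourier inversion produces a bounded continuous density $f$. For part~(ii) with distinct $t_1 < \ldots < t_n$, the same scheme requires $\phi \in L^1(\R^n)$, and this is the main obstacle: one needs polynomial decay of $|\phi(u)|$ \emph{uniform in the direction of $u$}. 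I would extract it from a local nondeterminism-type lower bound
\begin{align*}
\mathrm{Var}\!\left(\sum_{j=1}^n u_j Z_{t_j}\right) \;\gtrsim\; \sum_{j=1}^n u_j^2\,(t_j - t_{j-1})^{2H},
\end{align*}
available for $Z$ through the analytic Fourier techniques developed in~\cite{knsv-2020}, combined with the variance identity $2\sum_k \mu_k(u)^2 = \mathrm{Var}(\sum_j u_j Z_{t_j})$ and a simple counting estimate on how many $|\mu_k(u)|$ can be small. This yields $|\phi(u)| \lesssim (1 + \|u\|)^{-\alpha}$ with $\alpha > n$ and closes~(ii) by Fourier inversion.

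It remains to show $f(x) > 0$ for $x \geq 0$ in~(i). Writing $Z_1 = S - \mathrm{tr}$ with $S := \sum_k \lambda_k \epsilon_k^2$ and $\mathrm{tr} := \sum_k \lambda_k \in (0, \infty)$ (the kernel $K_1$ being trace-class), it suffices to prove that the density of $S$ is strictly positive on $(0, \infty)$: translating by $-\mathrm{tr}$ then gives $f > 0$ on $(-\mathrm{tr}, \infty) \supset [0, \infty)$ because $\mathrm{tr} > 0$. For this, note that $\lambda_1 \epsilon_1^2$ has the explicit density $(2\pi \lambda_1 x)^{-1/2} e^{-x/(2\lambda_1)} \mathbf{1}_{(0,\infty)}(x)$, which is strictly positive on $(0, \infty)$; convolving with the distribution of the independent nonnegative variable $\sum_{k \geq 2} \lambda_k \epsilon_k^2$ preserves strict positivity on $(0, \infty)$, which completes the argument.
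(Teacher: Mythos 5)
Your high-level plan for part (ii) — diagonalise, get $|\phi(u)| = \prod_k (1+4\mu_k(u)^2)^{-1/4}$, and prove $\phi \in L^1(\R^n)$ for Fourier inversion — is exactly the paper's route, but the mechanism you propose for the decay does not work. A local-nondeterminism bound $\mathrm{Var}(\sum_j u_j Z_{t_j}) \gtrsim \sum_j u_j^2 (t_j-t_{j-1})^{2H}$ is indeed available (it is a purely second-order statement and $Z$ has the fBm covariance), but via $2\sum_k \mu_k(u)^2 = \mathrm{Var}(\sum_j u_j Z_{t_j})$ it only controls the Hilbert--Schmidt norm of the operator. That is compatible with all of the mass sitting on a single eigenvalue $\mu_1(u)^2 \asymp \|u\|^2$, in which case the product decays no faster than $\|u\|^{-1/2}$, which is not integrable on $\R^n$ for any $n \geq 1$. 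No counting argument can recover, from the sum of squares alone, a lower bound on the \emph{number} of eigenvalues that are large; at best (using in addition an operator-norm upper bound) one guarantees a bounded number of large eigenvalues and hence a fixed, possibly tiny, polynomial decay rate, whereas integrability on $\R^n$ requires decay faster than $\|u\|^{-n}$ with $n$ arbitrary. The missing ingredient is a simultaneous lower bound on \emph{every} eigenvalue, uniform in the direction of $u$; this is precisely \cite[Lemma 2.2]{knsv-2020}, which gives $\lambda_k(t,\xi) \geq C(H)(\max_j |\xi_j-\xi_{j-1}|\,|t_j-t_{j-1}|^H)^2\tilde{\lambda}_k^4$ with $\tilde{\lambda}_k \sim k^{-H/2}$ independent of $t,\xi$. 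That estimate is the technical core of the paper's proof (it then integrates in polar coordinates against $G(s)=\prod_k(1+4s^2\tilde{\lambda}_k^4)^{-1/4}$), and it is not a consequence of the variance bound.

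The positivity argument in part (i) also rests on a false premise. The operator associated with $Z_1$ is Hilbert--Schmidt but \emph{not} trace class: its eigenvalues satisfy $\lambda_k \asymp k^{-H}$ with $H<1$ (equivalently, the kernel has a non-integrable diagonal singularity), so $\mathrm{tr}=\sum_k \lambda_k = +\infty$ and the decomposition $Z_1 = S - \mathrm{tr}$ with $S=\sum_k\lambda_k\epsilon_k^2$ is meaningless — the series $\sum_k\lambda_k(\epsilon_k^2-1)$ converges in $L^2$ only because of the centering, and its support is in fact unbounded below. The convolution idea is salvageable if you instead peel off a single \emph{centered} term, $Z_1 = \lambda_1(\epsilon_1^2-1) + R'$ with $R'$ independent and of mean zero: the first summand has a density that is strictly positive on $(-\lambda_1,\infty)$, and $\PP(R' < x+\lambda_1) \geq \PP(R'<0) > 0$ for every $x\geq 0$, which gives $f(x)>0$ there. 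The paper takes a different and softer route: continuity of $f$ from \cite{veillette-taqqu-2013}, unimodality from \cite{maejima-tudor-2013} (whence $f(0)>0$ since $\E[Z_1]=0$), and the tail-ratio asymptotics $\PP(Z_1>u+\alpha)/\PP(Z_1>u)\to c_H>0$ of \cite{veillette-taqqu-2013} to rule out $f$ vanishing anywhere on $(0,\infty)$.
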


\begin{proof}[Proof of Proposition~\ref{prop:rosen_density}]
	(i)  The density $f$ of $Z_1$ is continuous (see~\cite[Corollary 4.3]{veillette-taqqu-2013}) and unimodal (see~\cite{maejima-tudor-2013}). Therefore $f(0) > 0$ since $\E[Z_1] = 0$. To see that $f(y) > 0$ for  all $y > 0$, recall~\cite[Corrolary 4.5]{veillette-taqqu-2013}: for $\alpha > 0$, 
	\begin{align}
		\notag \lim_{u \to \infty} \frac{\PP(Z_1 > u + \alpha)}{\PP(Z_1 > u)} = c_H,
	\end{align}
	for a deterministic constant $c_H > 0$. In particular, this shows that for every $y \in \R_+$, there is $x > y$, such that $f(x) > 0$. Combined with the fact that $f$ is continuous and unimodal, this implies that $f(x) > 0$ for every $x \in \R_+$.
	
	(ii) If the characteristic function $\hat{\mu}(z)$ of a probability measure $\mu$ in $\R^d$ is integrable, then $\mu$ has a continuous density $g(x)$ that tends to $0$ as $|x| \to \infty$ (see~\cite[Proposition 2.5(xii)]{sato-2013}). Therefore, it is enough to show that for all $t \in \R_+^n$ and  $\xi \in \R^d$:
	\begin{align}
		\notag \int_{\R^d} \left| \mathbb{E} \exp \left( i \sum_{j = 1}^n \xi_j Z_{t_j} \right) \right| d \xi < \infty.
	\end{align}
	
	At this point we recall~\cite[Lemma 2.1, 2.2]{knsv-2020}.
	
	\begin{lemma}\label{lem:part_eval} Let $L^2_G(\R)$ be a weighted space with norm $\left\| f\right\|_{L_G^2}^2 := \int_\R \lvert f(x) \rvert^2 G(x)dx $. For $t \in \R_+^n$, $\xi \in \mathbb{R}^n$, let $A_{t, \xi} : L^2_G(\mathbb{R}) \to L^2_G(\mathbb{R})$ be the operator given by 
		\begin{align}
			\notag (A_{t, \xi} f)(x) = \int_{\mathbb{R}}  \sum_{j = 1}^n  \xi_j \frac{e^{i t_j (x - y) } -1 }{i (x - y)} f(y) |y|^{-H/2}dy.
		\end{align}
		Let $(\lambda_k(t, \xi))_{k \geq 1}$ be the set of eigenvalues of $A_{t, \xi}$. Then, 
		\begin{align}
			\notag \left| \mathbb{E} \exp \left( i \sum_{j = 1}^n \xi_j Z_{t_j} \right) \right|  = \prod_{k  \geq 1} \frac{1}{( 1 + 4 \lambda_k(t, \xi))^{1/4}}.
		\end{align}
		Moreover, if $t_0 = 0 < t_1 < \cdots < t_n \leq 1$, for every $k \geq 1$,
		\begin{align}
			\label{eq:bound_svalue} \lambda_k(t, \xi)  \geq C(H) (\max_{1 \leq j \leq n} |\xi_j - \xi_{j-1} | | t_j - t_{j-1}|^H )^2 \tilde{\lambda}_k^4, 
		\end{align}
		\noindent where $\tilde{\lambda}_k \sim k^{- H/2}$ (independent of $t$ and $\xi$), $\xi_0=0$ and $C(H) > 0$ is a constant that only depends on $H$. 
	\end{lemma}

	Now, we follow a similar procedure to the one employed for the proof of~\cite[Proposition 1.3]{knsv-2020}. Let $f_0: \R_+^n \times \R^n \to \R_+$ be given by
	\begin{align}
		\label{eq:f_0def} f_0(t, y) \coloneqq t_1^H |y_1| \vee t_2^H |y_2|  \vee \cdots \vee t_n^H |y_n|.
	\end{align}
	
	Further, let $\xi' = (\xi_1 - \xi_{0}, \xi_2 - \xi_{1}, \dots , \xi_n - \xi_{n-1})$ and $t' = (t_1 - t_{0}, t_2 - t_{1}, \dots , t_n - t_{n-1})$. Then
	\begin{align}
		\notag  & \int_{\R^d} \left| \mathbb{E} \exp \left( i \sum_{j = 1}^n \xi_j Z_{t_j} \right) \right| d \xi  \\
		\notag = & \int_{\R^d} \prod_{k \geq 1} (1 + 4 \lambda_k(\xi, t))^{-1/4} d \xi \\
		\notag \leq & \int_{\R^d} \prod_{k \geq 1} \left( 1 + 4C(H)(\max_{1 \leq j \leq n} |\xi_j - \xi_{j-1} | | t_j - t_{j-1}|^H )^2 \tilde{\lambda}_k^4\right)^{-1/4} d\xi \\
		\label{eq:fourier_1} = & \int_{\R^d} \prod_{k \geq 1} \left(1 + 4 C(H) f_0^2 (t', \xi') \tilde{\lambda}_k^4 \right)^{-1/4} d\xi'.
	\end{align}
	
	Let
	\begin{align}
		\notag  G(s) \coloneqq \prod_{k \geq 1} ( 1 + 4s^2 \tilde{\lambda}_k^4)^{-1/4}.
	\end{align}
	
	We can now switch to polar coordinates in~\eqref{eq:fourier_1} via  $|\xi'|=r'$, $\xi'/r'=w'$:
	\begin{align}
		\notag &  \int_{\R^d} \prod_{k \geq 1} \left(1 + 4 C(H) f_0^2 (t', \xi') \tilde{\lambda}_k^4 \right)^{-1/4} d\xi' \\
		\notag \leq  & C \int_{|w' | = 1} \int_0^\infty (r')^{n-1} G( \sqrt{C(H)} r' f_0(t', w') ) dr' \mathcal{H}^{n-1} (dw') \\
		\notag = & C \lp  \int_0^\infty R^{n-1} G(R) dR  \rp \lp \int_{|w'| = 1} ( f_0(t', w') )^{-n} \mathcal{H}^{n-1} (dw') \rp,
	\end{align}
	where $\mathcal{H}^{n-1} (dw')$ is the $(n-1)$-dimensional Hausdorff measure, $C > 0$ is a constant that depends on $H$ and the last equality follows with the change of variables $R = \sqrt{C(H) }r' f(t', w')$.
	
	Next, recall~\cite[Lemma 2.3]{knsv-2020} that $G(s)$ is finite and positive for any $s > 0$ and moreover there are  constants $c_1, c_2 > 0$ such that for all $\beta \geq 1$, 
	\begin{align}
		\notag \int_0^\infty  s^{\beta - 1} G(s) ds \leq c_2 H  c_1^{- \beta H} \Gamma ( \beta H),
	\end{align}	
	where $\Gamma$ is the Gamma function.
	
	Finally, since $t_1, \ldots, t_n  > 0$ are fixed, by the definition~\eqref{eq:f_0def} of $f_0(t', w')$, 
	\begin{align}
		\notag \int_{|w'| = 1} ( f_0(t', w') )^{-n} \mathcal{H}^{n-1} (dw')  \leq & C(t')  \int_{|w'| = 1} (|w_1| \vee \cdots \vee |w_n|)^{-n} \mathcal{H}^{n-1} (dw') \\
		\notag \leq & C(t') \int_{|w'| = 1} (n^{-1/2})^{-n} \mathcal{H}^{n-1} (dw') < \infty,
	\end{align}
	where $C(t') \coloneqq \inf \{ t_1'^{Hn}, \ldots, t_n'^{Hn} \}$ is a positive constant.

	Therefore, the characteristic function of $(Z_{t_1}, \ldots, Z_{t_n})$ is integrable and thus the joint distribution has a continuous density. 
	
\end{proof}

Next we establish a time inversion property for the Rosenblatt process:
\begin{prop}\label{prop:inverse_time} The inverse time process
	\begin{align} t \mapsto \tilde{Z}_t \coloneqq t^{2H} Z_{1/t},
	\end{align}
	is also a Rosenblatt process. 
\end{prop}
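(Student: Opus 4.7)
The plan is to show that $\tilde Z$ has the same finite-dimensional distributions as $Z$ by verifying the characterising properties of a Rosenblatt process. Recall that, up to normalisation, a Rosenblatt process of index $H$ is the unique $H$-self-similar process with stationary increments belonging to the second Wiener chaos. Accordingly, the plan is to verify: (a) continuity of $\tilde Z$ on $[0,\infty)$; (b) membership of each $\tilde Z_t$ in the second Wiener chaos; (c) $H$-self-similarity; (d) matching covariance structure (hence the correct normalisation $\mathrm{Var}(\tilde Z_1)=1$); and (e) stationarity of the increments.

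Items (a)--(d) are essentially direct. For (a), $\tilde Z$ inherits continuity on $(0,\infty)$ from that of $Z$, and the extension at $0$ is justified by $\tilde Z_t \overset{d}{=} t^H Z_1$ (from $H$-self-similarity of $Z$) combined with a Kolmogorov-type continuity argument. Item (b) is immediate from $\tilde Z_t = t^{2H} Z_{1/t}$, since each $Z_{1/t}$ lies in the second Wiener chaos and multiplication by a scalar preserves chaos. For (c), using the $H$-self-similarity of $Z$ at scale $1/c$,
\begin{align*}
(\tilde Z_{ct_j})_{j=1}^n = c^{2H}\bigl(t_j^{2H} Z_{(1/c)(1/t_j)}\bigr)_{j=1}^n \overset{d}{=} c^{2H}\cdot c^{-H}\bigl(t_j^{2H} Z_{1/t_j}\bigr)_{j=1}^n = c^{H}(\tilde Z_{t_j})_{j=1}^n.
\end{align*}
For (d), a direct algebraic computation yields
\begin{align*}
\E[\tilde Z_s \tilde Z_t]=s^{2H}t^{2H}\,\E[Z_{1/s}Z_{1/t}]=\tfrac12\bigl(s^{2H}+t^{2H}-|t-s|^{2H}\bigr)=\E[Z_s Z_t].
\end{align*}

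The main obstacle is (e). In the Gaussian fBm setting, property (d) alone suffices to conclude stationary increments of the time-inverted process, but this shortcut is unavailable for the non-Gaussian Rosenblatt process. My approach would be to express $\tilde Z_{s+u}-\tilde Z_s$ as a double Wiener--Ito integral via the spectral representation~\eqref{eq:rosen_spectral} with an explicit kernel, and to exhibit a measure-preserving change of variables in the spectral variables $(x,y)$ that, combined with the scaling property $G(tA) = t^{1-H}G(A)$ of the control measure, produces an $L^2(\mathbb{R}^2)$-isometry carrying the kernel of $\tilde Z_{s+u}-\tilde Z_s$ onto the kernel of $\tilde Z_u$; since symmetric $L^2$-isometries preserve the law of the double Wiener--Ito integral, this would give stationarity. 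Equivalently, one can invoke the characteristic-function formula of Lemma~\ref{lem:part_eval} and verify that the joint characteristic function of a finite family of increments depends only on their relative spacing. Once (e) is in hand, the characterisation of the Rosenblatt process as the unique (up to scaling) $H$-self-similar, stationary-increment process in the second Wiener chaos yields $\tilde Z \overset{d}{=} Z$, completing the identification of $\tilde Z$ as a Rosenblatt process.
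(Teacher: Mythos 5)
Your strategy has two genuine gaps. The first is that the characterisation you invoke at the very end --- that the Rosenblatt process is, up to normalisation, the \emph{unique} $H$-self-similar process with stationary increments in the second Wiener chaos --- is false. That class is strictly larger: the generalized Rosenblatt processes $Z^{\gamma_1,\gamma_2}$ of Maejima--Tudor and Bai--Taqqu (with $\gamma_1+\gamma_2$ fixed and $\gamma_1\neq\gamma_2$; the paper's own reference \cite{ayache-esmili-2020} concerns exactly these) are $H$-self-similar, have stationary increments, live in the second chaos, and after normalisation share the covariance $\tfrac12\lp s^{2H}+t^{2H}-|t-s|^{2H}\rp$, yet they are not equal in law to the Rosenblatt process (their higher cumulants differ). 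So even if you established all of (a)--(e), you could not conclude $\tilde Z\overset{d}{=}Z$. The second gap is that the one ingredient that could actually carry the identification --- your item (e), and more generally equality of finite-dimensional distributions --- is precisely the part you leave as a plan rather than a proof: asserting that some measure-preserving change of variables produces an $L^2$-isometry between the kernels of $\tilde Z_{s+u}-\tilde Z_s$ and $\tilde Z_u$ is the entire difficulty, since the time parameter enters the kernel of $\tilde Z$ as $1/t$ inside the exponential and a single substitution would have to work simultaneously for a whole family of increments.

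The paper takes a different and more direct route that avoids both problems. It writes $t^{2H}Z_{1/t}$ via the spectral representation \eqref{eq:rosen_spectral} evaluated at time $1/t$, performs the scaling substitution $x=x't^2$, $y=y't^2$, and applies the Dobrushin change-of-variables formula (Proposition~\ref{prop:changevar}) together with the homogeneity $G(t^2A)=t^{2(1-H)}G(A)$ of the control measure; the factor $t^{2(1-H)}$ contributed by the two noises cancels exactly against $t^{2H}\cdot t^{-2}$, and one recovers the spectral kernel of $Z_t$ itself. This identifies the law directly, with no appeal to any uniqueness theorem. If you want to keep your outline, the spectral-isometry computation you defer in (e) is essentially the computation the paper actually performs, so it must be carried out explicitly rather than cited as a remark; and the concluding appeal to uniqueness must be replaced by a direct identification of the kernel (or of the joint characteristic functions via Lemma~\ref{lem:part_eval}).
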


\begin{proof} First, using the spectral representation of a Rosenblatt process~\eqref{eq:rosen_spectral},
	\begin{align}
		\notag t^{2H} Z_{1/t} \overset{d}{=} & \quad C(H) t^{2H} \int_{\R^2} \frac{e^{i (x + y)/t } - 1}{i (x + y)}  Z_G(dx) Z_G(dy) \\
\notag = & \quad C(H) t^{2H}  \int_{\R^2} \frac{e^{i (x' + y')t } - 1}{i (x' + y')t^2}  Z_G(t^2 dx') Z_G(t^2 dy' ),
	\end{align}
with the change of variables $x = x't^2$ and $y = y't^2$.  Now recall  the change of variables formula for the It\^o integral~\cite[Proposition 4.2]{dobrushin-1979}:
	
	\begin{prop}\label{prop:changevar} Let $G$ and $G'$ be two non-atomic spectral measures such that $G$ is absolutely continuous with respect to $G'$, and let $g(x)$ be a complex valued function such that 
		\begin{align}
			\notag g(x) = &  \overline{g(-x)}, \\
			\notag |g^2(x)| = & \frac{d(G(x))}{d(G'(x))}. 
		\end{align}
		Let  $f : \R^2 \to \mathbb{C}$ be a measurable function such that:
		\begin{enumerate}
			\item $f(-x_1, -x_2) = \overline{f(x_1, x_2)}$, and
			\item $||f||^2 = \int |f(x_1, x_2)|^2 G(dx_1) G(dx_n) < \infty$. 
		\end{enumerate}
		
		Then, for $f'(x_1, x_2) = f(x_1, x_2) g(x_1) g(x_2)$, 
		\begin{align}
			\notag  \int f(x_1, x_2) Z_G(dx_1) Z_G(dx_2) \overset{d}{=} \int f'(x_1, x_2) Z_{G'} (dx_1) Z_{G'}(dx_2).
		\end{align}
	\end{prop}
	\medskip 

Let $G_{t^2} (A) \coloneqq G(At^2) = t^{2(1- H) } G(A)$ for every measurable $A$. We apply Proposition~\ref{prop:changevar} with $G$ and $G_{t^2}$,  i.e., with $|g(x)|^2 = t^{2(1- H) }$  a constant depending on $t$. Then, 
	\begin{align}
\notag  &  C(H) t^{2H}  \int_{\R^2} \frac{e^{i (x' + y')t } - 1}{i (x' + y')t^2}  Z_G(t^2 dx') Z_G(t^2 dy' ) \\
\notag = & \quad C(H) t^{2H}  \int_{\R^2} \frac{e^{i (x' + y')t } - 1}{i (x' + y')t^2}  Z_{G_{t^2}}( dx') Z_{G_{t^2}} (dy' ) \\
	\notag  \overset{d}{=} & \quad C(H) t^{2H}  \int_{\R^2} \frac{e^{i (x' + y')t } - 1}{i (x' + y')t^2}  t^{2 (1 - H)} Z_G( dx') Z_G (dy' ) \\
\notag = & \quad C(H)   \int_{\R^2} \frac{e^{i (x' + y')t } - 1}{i (x' + y')} Z_G( dx') Z_G (dy' ),
	\end{align} 
	and we recover the spectral representation of $Z_t$ as desired.

\end{proof}

\begin{remark} For the fractional Brownian motion $B_t^H$ of Hurst index $H \in (1/2, 1)$, the same fact is established using that the process is Gaussian and by comparing covariance functions. However, this property can also be recovered using the approach above. Indeed, we have the following spectral representation:
	\begin{align}
		\notag B_t^H \overset{d}{=} C(H) \int_\R \frac{e^{i \lambda t} - 1}{i \lambda} \frac{1}{|\lambda|^{H - 1/2} }  d \hat{B}(\lambda). 
	\end{align}
	The same change of variables yields the desired conclusion. 
\end{remark}

We also recall a result~\cite[Proposition 4.2]{knsv-2020} regarding oscillations:
\begin{prop}\label{prop:sup-tail} Let $(Z_t)_{t \geq 0}$ be the Rosenblatt process. Then for any $s > 0$ and $h \in (0, s)$, 
	\begin{align}
		\notag \PP\left(\sup_{t\in [s - h, s+ h] }|Z_t-Z_s|\geq u\right) \leq C\exp\left(-\frac{u}{c_1h^{H}}\right),
	\end{align}
	where $c_1$ and $C$ are constants that depend only on $H$. 
\end{prop}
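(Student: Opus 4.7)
The plan is to reduce, via stationarity and self-similarity, to a sub-exponential tail bound for $\sup_{t \in [0,1]}|Z_t|$, and then run a dyadic chaining argument driven by the exponential tail of the marginal $Z_1$.

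First I would eliminate $s$ and $h$. Split the interval $[s-h, s+h] = [s-h, s] \cup [s, s+h]$ (which lies in $\R_+$ since $h < s$). Property~\textbf{(2)} gives $\{Z_{s+t}-Z_s\}_{t \in [0,h]} \stackrel{d}{=} \{Z_t\}_{t \in [0,h]}$ and $\{Z_{s-h+t}-Z_{s-h}\}_{t \in [0,h]} \stackrel{d}{=} \{Z_t\}_{t \in [0,h]}$, and on the left half the triangle inequality yields $\sup_{u \in [s-h, s]}|Z_u - Z_s| \leq 2 \sup_{t \in [0, h]}|Z_{s-h+t}-Z_{s-h}|$. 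Combined with property~\textbf{(1)}, which gives $\sup_{t \in [0, h]} |Z_t| \stackrel{d}{=} h^H \sup_{t \in [0, 1]} |Z_t|$, a union bound reduces the problem to showing
\begin{align*}
\PP\left(\sup_{t \in [0,1]} |Z_t| \geq v\right) \leq C' \exp(-v/c_1'),
\end{align*}
with constants depending only on $H$; then substituting $v = u/(2 h^H)$ gives the statement.

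Second, I would establish the exponential tail for $Z_1$: $\PP(|Z_1| \geq w) \leq C e^{-cw}$ for constants depending only on $H$. Either one can invoke $\PP(Z_1 > w) \sim c_H e^{-c_2 w}$ from~\cite[Corollary 4.5]{veillette-taqqu-2013} (together with an analogous bound on the left tail), or one can use the representation~\eqref{eq:repn_thorin}: $Z_1 \stackrel{d}{=} \sum_{n \geq 1} \lambda_n(1)(\epsilon_n^2 - 1)$ is a centered element of the second Wiener chaos with $\sum_n \lambda_n(1)^2 < \infty$, so its Laplace transform is finite on a neighborhood of the origin and a Chernoff bound gives the desired sub-exponential tail.

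Third is the dyadic chaining. For $n \geq 1$, let $\Delta_n := \max_{1 \leq k \leq 2^n} |Z_{k/2^n} - Z_{(k-1)/2^n}|$. By properties~\textbf{(1)} and~\textbf{(2)}, each increment is distributed as $2^{-nH} Z_1$, so the union bound combined with Step~2 yields $\PP(\Delta_n \geq x) \leq 2^n C \exp(-c \, 2^{nH} x)$. Path continuity (property~\textbf{(3)}) implies the telescoping bound $\sup_{t \in [0,1]} |Z_t| \leq \sum_{n \geq 1} \Delta_n$. Choosing $x_n := \alpha v \cdot n \cdot 2^{-nH}$ with $\alpha^{-1} := \sum_{n \geq 1} n \cdot 2^{-nH}$ so that $\sum_n x_n = v$, the union bound gives
\begin{align*}
\PP\left(\sup_{t \in [0,1]} |Z_t| \geq v\right) \leq \sum_{n \geq 1} \PP(\Delta_n \geq x_n) \leq C \sum_{n \geq 1} \bigl(2 e^{-c \alpha v}\bigr)^n,
\end{align*}
which decays like $C'' e^{-c \alpha v}$ once $v$ is so large that $2 e^{-c \alpha v} \leq 1/2$; for small $v$ the bound is trivial after enlarging the constant.

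The main obstacle I foresee is Step~2: the Veillette--Taqqu asymptotics pin down the precise decay rate but only as a limit statement, so upgrading this to a uniform upper tail estimate requires some care. The cleanest route in practice is the second-chaos representation, where a direct computation of the moment generating function yields the required sub-exponential tail without any appeal to sharp asymptotics, and also cleanly tracks the dependence of the constants on $H$ that is needed in Step~3.
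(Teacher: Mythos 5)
Your argument is correct and self-contained, but note that the paper does not prove this proposition at all: it is quoted verbatim from~\cite[Proposition~4.2]{knsv-2020}, so there is no in-paper proof to match. The proof in that reference rests on the same two pillars as yours, just packaged differently: instead of computing the Laplace transform of the second-chaos representation~\eqref{eq:repn_thorin}, it uses the hypercontractive moment bound $\|F\|_{L^p} \leq (p-1)\|F\|_{L^2}$ valid on the second Wiener chaos to get $\E|Z_t-Z_s|^p \leq (Cp)^p|t-s|^{pH}$, and then a chaining/Garsia--Rodemich--Rumsey step to control the supremum, with Markov's inequality optimized over $p$ producing the $\exp(-u/(c_1h^H))$ tail. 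Your route (MGF of $\sum_n\lambda_n(1)(\ep_n^2-1)$, finite near the origin since $\sum_n\lambda_n(1)^2<\infty$, followed by dyadic chaining with the weights $x_n = \alpha v\, n\, 2^{-nH}$) is morally equivalent and all the individual steps check out: the reduction via stationarity, the factor $2$ on the left half of the interval, the distributional identity $\sup_{t\in[0,h]}|Z_t|\overset{d}{=}h^H\sup_{t\in[0,1]}|Z_t|$, and the geometric summation are all fine, and the constants you produce depend only on $H$ as required. You are also right to be wary of deriving the tail from~\cite[Corollary~4.5]{veillette-taqqu-2013}, which is only a ratio limit and does not by itself give a uniform upper bound; the chaos/MGF route is the robust one. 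The only advantage of the reference's moment-based version is that the same $L^p$ estimates are reused elsewhere in~\cite{knsv-2020} for the local-time regularity, whereas your argument is more elementary and entirely self-contained given properties \textbf{(1)}--\textbf{(3)} and~\eqref{eq:repn_thorin}.
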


We need the following properties of the local time of the Rosenblatt process. Its existence was shown in~\cite{shevchenko-2010} and one has the representation: 
\begin{align}
	\label{eq:local_rep} L(x, t) = \frac{1}{2 \pi} \int_\R \int_0^t e^{i \xi (x - Z_s) } ds d\xi.
\end{align}
As we mentioned in the beginning of this section, $Z$ is selfsimilar of index $H$, then its local time at level $x$ also has some selfsimilarity properties in time with index $1-H$, but with a different level as stated below. More precisely, one has, for every $c>0$:
\begin{align}
	\label{eq:local_selfsimilar}
	(L(x,ct))_{t\geq 0, x\in\mathbb{R}} 
	\stackrel{d}{=} 
	c^{1-H} (L(c^{-H}x,t))_{t\geq 0, x\in\mathbb{R}}.
\end{align}
Indeed, for every $c>0$, $t \geq 0$ and $x \in \R$, one has 
\begin{align*}
	L(x, ct) & = \frac{1}{2 \pi} \int_\R \int_0^{ct} e^{i \xi (x - Z_s) } ds d\xi   = c \,\frac{1}{2 \pi} \int_\R \int_0^{t} e^{i \xi (x - Z_{cs}) } ds d\xi\\
	&  \stackrel{d}{=} c \,\frac{1}{2 \pi} \int_\R \int_0^{t} e^{i \xi (x - c^HZ_s) } ds d\xi   = c^{1-H} \, \frac{1}{2 \pi} \int_\R \int_0^{t} e^{i \xi (c^{-H}x - Z_s) } ds d\xi  = L(c^{-H}x,t).
\end{align*}
Moreover, a recent result~\cite[Theorem 1.4]{knsv-2020} describes the scaling behavior of the local time of $Z$: 
\begin{prop}The local time $L(x, [0,t])$ is jointly continuous with respect to $(x,t)$ and has finite moments. For a finite closed interval $I \subset (0, \infty)$, let $L^*(I) = \sup_{x\in\R}L(x,I)$. There exist positive constants $C_1$ and $C_2$ such that, almost surely, for any $s \in I$,
	\begin{align}
		\label{eq:main-fixed-s}
		\limsup_{r\to 0} \frac{L^*([s-r,s+r])}{r^{1-H}(\log\log r^{-1})^{\cexp}} \leq C_1,
	\end{align}
	and
	\begin{align}
		\label{eq:main-sup-s}
		\limsup_{r\to 0}\sup_{s\in I} \frac{L^*([s-r,s+r])}{r^{1-H}(\log r^{-1})^{\cexp}} \leq C_2.
	\end{align}
	\label{HolderCond}
\end{prop}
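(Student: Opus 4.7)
The plan is to follow Berman's analytic method, adapted to the Rosenblatt setting through Lemma~\ref{lem:part_eval} and its companions from \cite{knsv-2020}. The technical heart is a sharp moment bound of the form
\begin{align*}
\E\bigl[(L^*([s-r,s+r]))^k\bigr] \leq C^k (k!)^{2H} r^{k(1-H)},
\end{align*}
valid uniformly over $s$ in a compact sub-interval of $(0,\infty)$ and over integers $k \geq 1$. Starting from the Fourier representation~\eqref{eq:local_rep}, the $k$-th moment of $L(x, I)$ is
\begin{align*}
\E[L(x,I)^k] = \frac{1}{(2\pi)^k} \int_{I^k} \int_{\R^k} e^{-i \sum_j \xi_j x}\, \E\Bigl[\exp\Bigl(i \sum_{j=1}^k \xi_j Z_{t_j}\Bigr)\Bigr]\, d\xi\, dt.
\end{align*}
Restricting to the ordered simplex $t_1 < \cdots < t_k$, changing variables to increments $t'_j = t_j - t_{j-1}$ and $\xi'_j = \sum_{\ell \geq j} \xi_\ell$, applying Lemma~\ref{lem:part_eval} to dominate the characteristic function by $\prod_{m \geq 1} (1 + 4 C(H) f_0(t',\xi')^2 \tilde\lambda_m^4)^{-1/4}$, and then integrating in polar coordinates in $\xi'$ exactly as in the proof of Proposition~\ref{prop:rosen_density}(ii) combined with the Gamma bound from \cite[Lemma~2.3]{knsv-2020}, produces the claimed moment estimate. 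Passing from $L(x, I)$ to $L^*(I)=\sup_x L(x,I)$ and simultaneously obtaining joint continuity in $(x,t)$ is handled via a Kolmogorov argument applied to bi-increments $L(x,I)-L(y,I')$, whose moments admit a parallel Fourier-side bound.

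Given this moment estimate, both $\limsup$ statements follow from Chebyshev and Borel--Cantelli. Along a geometric sequence $r_n = 2^{-n}$, the moment bound combined with optimization in $k$ yields
\begin{align*}
\PP\bigl(L^*([s-r_n, s+r_n]) \geq M r_n^{1-H}\bigr) \leq \exp\bigl(-c\, M^{1/(2H)}\bigr).
\end{align*}
For the pointwise bound~\eqref{eq:main-fixed-s} I would take $M = \lambda (\log \log r_n^{-1})^{2H}$ and sum in $n$; for the uniform bound~\eqref{eq:main-sup-s} I would cover $I$ by $\asymp r_n^{-1}$ sub-intervals of radius $r_n$, union-bound, and take $M = \lambda (\log r_n^{-1})^{2H}$. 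In both cases Borel--Cantelli delivers the conclusion along $(r_n)$, and monotonicity of $r \mapsto L^*([s-r, s+r])$ in $r$ extends it to continuous $r \to 0$.

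The main obstacle is producing the moment bound with the correct exponent $(k!)^{2H}$, since this is what determines the sharp $2H$ power of the logarithm in the conclusion. The subtlety is that the eigenvalue lower bound~\eqref{eq:bound_svalue} controls only the \emph{maximum} among the $|t'_j|^H |\xi'_j - \xi'_{j+1}|$, so the decay of $G$ must be spent along all $k$ coordinate directions of $\xi'$; the polar-coordinate trick reduces this to an angular integral of $f_0(t',w')^{-k}$ together with an integration over the ordered simplex, which produces the factor $|I|^{k(1-H)}/\Gamma(k(1-H)+1)$. Tracking constants across these two steps and across the Sobolev-type passage to $L^*(I)$ carefully enough to land on $(k!)^{2H}$ rather than a worse power is the most delicate computation in the proof.
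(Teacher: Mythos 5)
The paper does not actually prove this proposition: it is imported verbatim as \cite[Theorem 1.4]{knsv-2020} (together with \cite[Corollary 3.2]{knsv-2020} for the joint continuity), so there is no internal proof to compare your attempt against. What you have written is, in substance, a reconstruction of the strategy of that reference: Berman's Fourier-analytic method, the eigenvalue lower bound of Lemma~\ref{lem:part_eval} pushed through the polar-coordinate and Gamma-function computation (exactly as in the proof of Proposition~\ref{prop:rosen_density}(ii)) to obtain moments of order $(k!)^{2H}$, then Chebyshev and Borel--Cantelli along $r_n=2^{-n}$, with a $\log\log$ for fixed $s$ and a $\log$ after a union bound over $\asymp r_n^{-1}$ centres, and monotonicity in $r$ to interpolate. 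That architecture is correct and the optimization $\inf_k C^k(k!)^{2H}M^{-k}\approx e^{-cM^{1/(2H)}}$ does deliver the stated exponents.

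Two steps in your sketch are declared rather than argued, and they are precisely where the difficulty sits. First, the passage from $\E[L(x,I)^k]$ to $\E[(\sup_{x\in\R}L(x,I))^k]$ with the same $(k!)^{2H}$ growth: a Kolmogorov or Garsia--Rodemich--Rumsey argument in $x$ only controls the supremum over a \emph{compact} range, so you must first localize the support of $L(\cdot,I)$ inside an interval of length $O(r^{H})$ (up to logarithmic corrections) around $Z_s$ using the oscillation tail of Proposition~\ref{prop:sup-tail}, and then chain over a grid in $x$ at scale $r^{H}$ using the space-increment moment bounds (the $\gamma$-exponent estimates of \cite[Theorem 3.1]{knsv-2020}); without this localization the supremum over all of $\R$ is not reachable, and the extra logarithmic factors generated here must be absorbed into the $(\log\log r^{-1})^{2H}$, which is where the sharpness of the exponent is actually decided. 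Second, the joint continuity in $(x,t)$ requires the bi-increment moment bounds you allude to, which constitute a separate, if parallel, Fourier computation. With those two ingredients supplied your plan closes; as written it is a faithful outline of the cited proof rather than a complete argument.
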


Furthermore, we can establish the following property which is key in the study of the classical Hausdorff dimension of the level sets.

\begin{prop}\label{prop:sup_holder_lt} For $\beta \in \left( 0, \frac{1}{2} \left( \frac{1}{H} - 1\right) \right)$, 
	\begin{align}
		\notag \PP \left( \sup_{x \in [-1, 1] \setminus \{0\} } \frac{ \left|L \left(0, \left[ \frac{1}{2}, 1\right]\right) - L \left(x, \left[ \frac{1}{2}, 1\right]\right)\right| }{|x|^\beta} < \infty \right) = 1. 
	\end{align} 
	
\end{prop}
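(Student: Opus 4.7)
The approach is to establish a moment bound of the form
$$\E\big[(L(0, I) - L(x, I))^{2n}\big] \leq C_n |x|^{2n\beta}$$
with $I = [1/2, 1]$, $\beta < (1/H - 1)/2$, and every integer $n$, and then conclude by Markov's inequality and Borel--Cantelli at the dyadic scales $x_m = 2^{-m}$, continuity of $x \mapsto L(x, I)$ interpolating between dyadics.

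For the moment bound I would begin from the Fourier inversion representation~\eqref{eq:local_rep}, giving
$$\E\big[(L(0, I) - L(x, I))^{2n}\big] = \frac{1}{(2\pi)^{2n}} \int_{I^{2n}} \int_{\R^{2n}} \E\big[e^{-i \sum_j \xi_j Z_{s_j}}\big] \prod_{j=1}^{2n}(1 - e^{i \xi_j x})\, d\xi\, ds.$$
The elementary bound $|1 - e^{i\xi x}| \leq 2^{1-\beta}|\xi|^\beta|x|^\beta$ extracts the factor $|x|^{2n\beta}$ and leaves $|\E[e^{-i\sum \xi_j Z_{s_j}}]|\prod_j|\xi_j|^\beta$ in the integrand. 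Restricting by symmetry to the sorted simplex $\{s_1 < \cdots < s_{2n}\}$ (at the cost of a multiplicative $(2n)!$) and applying Lemma~\ref{lem:part_eval} with $t_0 = 0$ yields
$$\big|\E[e^{-i\sum \xi_j Z_{s_j}}]\big| \leq G\big(\max_j |\xi_j - \xi_{j-1}| \sigma_j^H\big), \qquad \sigma_j := s_j - s_{j-1},$$
with $G(u) = \prod_k (1 + 4 C(H) u^2 \tilde\lambda_k^4)^{-1/4}$, which decays faster than any polynomial by the estimate invoked in the proof of Proposition~\ref{prop:rosen_density}. After the lower-triangular substitutions $\eta_j = \xi_j - \xi_{j-1}$ and $\tilde\eta_j = \eta_j \sigma_j^H$ (producing the Jacobian $\prod_j \sigma_j^{-H}$) followed by polar coordinates $\tilde\eta = rw$ in $\R^{2n}$, the radial integration $\int_0^\infty r^{2n\beta + 2n - 1} G(r \max_k |w_k|)\,dr$ is finite by Lemma~2.3 of~\cite{knsv-2020}, leaving a spherical integral of $\prod_j \big|\sum_{k \leq j} w_k \sigma_k^{-H}\big|^\beta (\max_k |w_k|)^{-2n(1+\beta)}$ to be combined with $\prod_j \sigma_j^{-H}$ and integrated over the sorted simplex.

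The main obstacle is controlling this final $s$-integral with the sharp exponent $\beta < (1/H - 1)/2$ uniformly in $n$. A helpful simplification is that $\sigma_1 = s_1 \geq 1/2$ is bounded below, so no integrability issue arises from $\sigma_1$ and only the increments $\sigma_j$ for $j \geq 2$ can be small. The naive sub-additivity bound $|\sum_k w_k \sigma_k^{-H}|^\beta \leq \sum_k |w_k|^\beta \sigma_k^{-\beta H}$ produces factors $\sigma_j^{-H(1 + m_j \beta)}$ in which the combinatorial multiplicities $m_j$ can be as large as $2n$, giving a constraint on $\beta$ that vanishes as $n \to \infty$; a sharper estimate exploiting the cancellations in the product over $j$ (analogous to the strong local nondeterminism argument used for fractional Brownian motion, but adapted to the $\max$-type bound of Lemma~\ref{lem:part_eval}) is required to recover the uniform exponent. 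Once the moment bound is established, Markov's inequality gives $\PP(|L(0,I) - L(x_m, I)| \geq |x_m|^\beta) \leq C_n\, 2^{-mn\delta}$ for some $\delta > 0$, and Borel--Cantelli combined with the continuity of $L(\cdot, I)$ between consecutive dyadics delivers the claimed a.s. Hölder property at $0$.
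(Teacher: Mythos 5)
Your overall strategy --- spatial moment bounds for increments of the local time, upgraded to a uniform H\"older-at-$0$ statement --- is in the same spirit as the paper's, but as written it has two genuine gaps. First, the moment estimate $\E\bigl[|L(0,I)-L(x,I)|^{2n}\bigr]\le C_n|x|^{2n\beta}$ is the entire technical content of the proposition, and you do not prove it: you set up the Fourier-inversion integral, the $|1-e^{i\xi x}|\le 2^{1-\beta}|\xi|^\beta|x|^\beta$ trick, the triangular change of variables and the polar decomposition correctly, but then you yourself identify that the naive subadditivity bound on $\bigl|\sum_{k\le j} w_k\sigma_k^{-H}\bigr|^\beta$ produces multiplicities $m_j$ up to $2n$ and hence a constraint on $\beta$ that degenerates as $n\to\infty$, and you defer the ``sharper estimate exploiting cancellations'' that would rescue the uniform exponent. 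That deferred estimate is precisely the hard part; without it the argument does not close. The paper avoids this entirely by citing the already-established bound $\E\bigl[|L(r,I)-L(v,I)|^p\bigr]\le c(\gamma,H)^p\,p^{2Hp(1+\gamma)}\,|r-v|^{\gamma p}$ (Theorem 3.1 of the reference \cite{knsv-2020}), valid for $\gamma\in\bigl[0,\tfrac{1-H}{2H}\bigr)$, as a black box.

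Second, even granting the moment bound, your concluding step is too weak. Applying Markov and Borel--Cantelli only along the single sequence $x_m=2^{-m}$ controls $|L(0,I)-L(x_m,I)|$ at dyadic points, and ``continuity of $L(\cdot,I)$ interpolating between dyadics'' gives no \emph{rate} on $|L(x,I)-L(x_m,I)|$ for $x\in(2^{-m-1},2^{-m})$; qualitative continuity cannot convert a countable family of bounds into the supremum over all $x\in[-1,1]\setminus\{0\}$. You need a full chaining argument over all pairs of dyadic points at all scales (Kolmogorov--Chentsov style), or, as the paper does, the Garsia--Rodemich--Rumsey lemma with $\Psi(u)=|u|^p$ and $p(u)=|u|^{\alpha+1/p}$ applied to $f(x)=L(2x-1,[\tfrac12,1])$, which packages the chaining and reduces the supremum directly to a double integral of $p$-th moments of increments; taking $\alpha=\gamma/2$, $p$ large, and invoking Fatou then finishes the proof. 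I recommend you replace your self-contained moment derivation by the citation to the known estimate and replace the dyadic Borel--Cantelli step by GRR or an explicit chaining argument.
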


\begin{proof} The result relies on a  celebrated  lemma due to Garsia, Rodemich and Rumsey~\cite{grr-1970},  as well as on the moment estimates for the local time in~\cite{knsv-2020}. First, let us recall the lemma from~\cite{grr-1970}:

	\begin{lemma} Let $\Psi(u)$ be a non-negative even function on $(- \infty, \infty)$ and $p(u)$ be a non-negative even function on $[-1,1]$. Assume both $p(u)$ and $\Psi(u)$ are non decreasing for $u \geq 0$. Let $f$ be continuous on $[0, 1]$ and suppose that
		\begin{align}
			\notag \int_0^1 \int_0^1 \Psi\left( \frac{f(u) - f(v) }{p(u - v)} \right)du dv \leq B < \infty.
		\end{align}
		Then, for all $x, y \in [0, 1]$, 
		\begin{align}
			\notag |f(x) - f(y) | \leq 8 \int_0^{|x - y|} \Psi^{-1} \left( \frac{4B}{u^2} \right) dp(u).
		\end{align}
	\end{lemma}

	Let $\Psi(u) = |u|^p$ and $p(u) = |u|^{\alpha + 1/p}$ where $\alpha \geq 1/p$ and $p \geq 1$. Then for any continuous $f$ and $x \in [0,1]$, 
	$$
	|f(x)-f(y)|^p \leq C_{\alpha,p}|x - y|^{\alpha p -1}\int_{[0,1]^2}|f(r)-f(v)|^p |r-v|^{-\alpha p - 1}dr dv.
	$$
	Here the constant $C_{\alpha,p}$ is given by
	$
	C_{\alpha,p} = {4\cdot 8^p\big(\alpha + p^{-1}\big)^{p}\big(\alpha -p^{-1}\big)^{-p}}.
	$
	Thus, for fixed $\alpha$ and large enough $p$, we have
	$
	C_{\alpha,p} \leq C(\alpha)^p,
	$
	where $C(\alpha)>0$ is a constant that depends on the chosen $\alpha$.  We apply this
	to $f(x) = L\left(2x-1, \left[ \frac{1}{2}, 1\right]\right)$:
	\begin{align}
		\notag & \sup_{x \in [-1,1]\setminus \{0\}} \frac{\left|L\left(x, \left[ \frac{1}{2}, 1\right]\right) - L\left(0, \left[ \frac{1}{2}, 1\right]\right) \right|^p}{\left(\frac{x}{2}\right)^{\alpha p -1} }\\
		\notag \leq & \quad C'^p \int_{[0,1]^2} \left|L\left(r, \left[ \frac{1}{2}, 1\right]\right) - L\left(v, \left[ \frac{1}{2}, 1\right]\right) \right|^p |r-v|^{-\alpha p - 1}dr dv.
	\end{align}
	Using the moment bounds for the occupation density established in~\cite[Theorem 3.1]{knsv-2020}, one has
	\begin{align}
		\notag & \E \left[ \sup_{x \in [-1,1]\setminus \{0\}} \frac{\left|L\left(x, \left[ \frac{1}{2}, 1\right]\right) - L\left(0, \left[ \frac{1}{2}, 1\right]\right) \right|^p}{\left(\frac{x}{2}\right)^{\alpha p -1} }\right] \\
		\notag \leq & \quad C'^p \int_{[0,1]^2}  \frac{c(\gamma, H)^p p^{p2H(1 + \gamma) }2^{\gamma p} |r - v|^{\gamma p} }{2^{(1 - H - \gamma H)} }  |r-v|^{-\alpha p - 1}dr dv,
	\end{align}
	where $\gamma \in \left[ 0, \frac{1-H}{2H} \right)$ and $c(\gamma, H) > 0$ is a constant depending only on $\gamma$ and $H$. Let $\alpha = \gamma/2$ and $p > 4 / \gamma$. Then, 
	\begin{align}
		\notag  \E \left[ \sup_{x \in [-1,1]\setminus \{0\}} \frac{\left|L\left(x, \left[ \frac{1}{2}, 1\right]\right) - L\left(0, \left[ \frac{1}{2}, 1\right]\right) \right|^p}{\left(\frac{x}{2}\right)^{\alpha p -1} }\right] \leq C(\gamma, H, p), 
	\end{align}
	where $C(\gamma, H, p) > 0$ is a constant that depends on $\gamma, H$ and $p$.

	Fatou's lemma implies that 
	\begin{align}
		\notag \PP \left( \sup_{x \in [-1, 1]/\{0\} } \frac{ \left|L \left(0, \left[ \frac{1}{2}, 1\right]\right) - L \left(x, \left[ \frac{1}{2}, 1\right]\right)\right| }{|x|^\beta} < \infty \right) = 1, 
	\end{align} 
	as desired.
	
\end{proof}

Finally, the local time is H\"older continuous in both time and space~\cite[Corollary 3.2]{knsv-2020}. In particular: 
\begin{prop}
	\label{HolderLocalTime}
	For every $x \in \R$,almost surely, the local time $L(x,t)$ is H\"older continuous in t of order $\alpha$ for every $\alpha \in [0,1-H)$.
\end{prop}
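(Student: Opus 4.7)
The plan is to apply Kolmogorov's continuity theorem to the one-parameter process $t \mapsto L(x,t)$ for fixed $x \in \R$. The key ingredient is a moment estimate of the form
\begin{align}
\notag \E\bigl[|L(x,t)-L(x,s)|^p\bigr] \leq C_p\, |t-s|^{p(1-H)}
\end{align}
for every $p \geq 1$, which then yields, via Kolmogorov, local H\"older continuity of any order strictly less than $1-H-1/p$; letting $p\to\infty$ gives any exponent $\alpha\in[0,1-H)$, as required.

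To prove the moment bound, first observe that, by additivity of the occupation measure, $L(x,t)-L(x,s)=L(x,[s,t])$ is the occupation density at level $x$ of the shifted path $(Z_{s+u})_{u\in[0,t-s]}$. By the stationary increments property of $Z$ (property \textbf{(2)} in Section~\ref{sec:rosenblatt}), the process $(Z_{s+u}-Z_s)_{u\geq 0}$ is itself a Rosenblatt process, and
\begin{align}
\notag L\bigl(x,[s,t]\bigr) = \tilde L\bigl(x-Z_s,\,[0,t-s]\bigr),
\end{align}
where $\tilde L$ denotes the local time of the shifted process. Taking the supremum over the spatial variable to remove the dependence on $Z_s$, one gets
\begin{align}
\notag L\bigl(x,[s,t]\bigr) \leq \tilde L^*\bigl([0,t-s]\bigr),
\end{align}
and the self-similarity property~\eqref{eq:local_selfsimilar} of the local time (applied with $c=t-s$) gives the distributional identity $\tilde L^*([0,t-s]) \stackrel{d}{=} (t-s)^{1-H}\tilde L^*([0,1])$. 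Consequently,
\begin{align}
\notag \E\bigl[|L(x,[s,t])|^p\bigr] \;\leq\; (t-s)^{p(1-H)}\,\E\bigl[(\tilde L^*([0,1]))^p\bigr].
\end{align}

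The finiteness of $\E[(L^*([0,1]))^p]$ for every $p\geq 1$ is exactly the moment estimate for the occupation density invoked from~\cite[Theorem~3.1]{knsv-2020} in the proof of Proposition~\ref{prop:sup_holder_lt}; thus $C_p:=\E[(\tilde L^*([0,1]))^p]<\infty$ and the desired bound follows. Feeding this into Kolmogorov's continuity theorem concludes the proof.

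The only delicate point is the reduction step: $Z_s$ and the shifted Rosenblatt process $(Z_{s+u}-Z_s)_{u\geq 0}$ are \emph{not} independent, so one cannot directly evaluate $\tilde L$ at the random point $x-Z_s$ and pull out an expectation. Passing to the uniform-in-space bound $\tilde L^*$ circumvents this obstacle, at the acceptable cost of replacing a pointwise quantity by its spatial supremum, which has been shown to enjoy the same moment control.
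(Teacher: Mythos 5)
The paper does not actually prove this proposition: it is imported wholesale as~\cite[Corollary 3.2]{knsv-2020}, so your argument is necessarily a different route --- you reconstruct the temporal H\"older continuity from ingredients that are stated elsewhere in the paper. The skeleton of your reconstruction is sound and is the standard one: additivity gives $L(x,t)-L(x,s)=L(x,[s,t])$, the pointwise bound by the spatial supremum $L^*([s,t])$ removes the random recentering by $Z_s$, stationarity of increments plus the scaling identity~\eqref{eq:local_selfsimilar} give $L^*([s,t])\stackrel{d}{=}(t-s)^{1-H}L^*([0,1])$, and Kolmogorov's criterion with $p\to\infty$ delivers every exponent below $1-H$ (one should add the routine remark that $t\mapsto L(x,t)$ is already continuous and nondecreasing, so the H\"older modification produced by Kolmogorov is indistinguishable from the process itself, and that ``for every $x$, a.s.'' only requires applying the criterion at each fixed $x$).

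The one step you assert rather than prove is $C_p:=\E\bigl[(L^*([0,1]))^p\bigr]<\infty$ for all $p$. This is \emph{not} ``exactly'' the moment estimate of \cite[Theorem 3.1]{knsv-2020} as it is used in this paper: that result, as invoked in the proof of Proposition~\ref{prop:sup_holder_lt}, controls moments of \emph{spatial increments} $\E\bigl[|L(r,I)-L(v,I)|^p\bigr]$, not moments of the supremum $\sup_{x\in\R}L(x,I)$. To close this you need two further (standard, but nontrivial) observations: first, a Garsia--Rodemich--Rumsey chaining argument over a compact spatial window, exactly as carried out in the proof of Proposition~\ref{prop:sup_holder_lt}, upgrades the increment bounds to moment control of $\sup_{|x|\leq a}L(x,I)$; second, since $L(\cdot,I)$ vanishes outside the range of $Z$ on $I$, the supremum over all of $\R$ is the supremum over the (random, compact) range, whose diameter has moments of all orders by the exponential tail bound of Proposition~\ref{prop:sup-tail}. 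With those two additions the constant $C_p$ is finite and your proof is complete; without them the key inequality $\E[|L(x,t)-L(x,s)|^p]\leq C_p|t-s|^{p(1-H)}$ rests on an unproved claim. (The existence of such bounds for $L^*$ is of course consistent with Proposition~\ref{HolderCond}, which already makes almost-sure statements about $L^*$ of small intervals, but that proposition does not itself supply the moment bound you need.)
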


\section{Image sets}\label{sec:image_set}

The present section is dedicated to the study of intermediate dimensions and profiles. To make a comparison, we recall the more popular packing dimensions and profiles. 

\subsection{Packing dimensions}\label{sec:packing_dim}
First, recall the definition of the packing dimension. For any $\alpha > 0$, the $\alpha-$dimensional packing measure of $E \subset \R^N$ is
\begin{align}
\notag \sap{E}\coloneqq \inf \lcl \sum_n \saP{E_n} : E \subseteq \bigcup_n E_n \rcl,
\end{align} 
where for $E \subset \R$, 
\begin{align} 
\notag \saP{E} \coloneqq \lim_{\ep \to 0} \sup \lcl \sum_i (2r_i)^\alpha : \overline{B}(x_i, r_i) \text{ are disjoint }, x_i \in E, r_i < \ep \rcl.
\end{align}
The packing dimension of $E$ is 
\begin{align}
\label{df:packingdimesion}	
\dpack{E} \coloneqq \inf \{ \alpha > 0 : \sap{ E} = 0 \}
\end{align}
and the packing dimension of a Borel measure $\mu$ on $\R^N$ is defined by 
\begin{align}
	\notag \dpack{\mu} \coloneqq \inf \{ \dpack{E} \, : \, \mu(E)>0 \mbox{ and } E \subset \R^N \mbox{ is a Borel set}\}.
\end{align}
Next, we recall the concept of packing dimension profiles first conceived by Falconer and Howroyd in~\cite{falconer1997packing} and~\cite{howroyd2001box}. For finite Borel measures $\mu$ on $\R^N$ and for any $s>0$, let
\begin{align*}
	F_s^\mu(x,r)= \int_\R \psi_s \left(\frac{x-y}{r}\right)d \mu(y),
\end{align*}  
be the potential with respect to the kernal $\psi_s \left(x\right)= \min \left\{1, \lVert x \rVert^{-s}\right\}$,$\forall x \in \R^N$.
\\ The packing dimension profile of $\mu$ is defined as follows 
\begin{align*}
	\dpackprof{s}{\mu}= \sup \left\{ \beta \geq 0: \, \liminf_{r \rightarrow 0} \dfrac{F_s^\mu(x,r)}{r^\beta}=0 \mbox{ for } \mu-a.e. \, x \in \R^N\right\}.
\end{align*}
Now for any Borel set $E \subset \R^N$, we define $\mathcal{M}_c^+(E)$ to be the family of finite Borel measures on E with compact support in E. Then
\begin{align*}
	\dpack{E} = \sup \left\{\dpack{\mu} \, : \, \mu \in \mathcal{M}_c^+(E)\right\}.
\end{align*}
Motivated by  this, Falconer and Howroyd \cite{falconer1997packing} define s-dimensional packing dimension profile of $E \subset \R^N$ by
\begin{align*}
	\dpackprof{s}{E}= \sup \left\{\dpackprof{s}{\mu} \, : \, \mu \in \mathcal{M}_c^+(E)\right\}.
\end{align*} 
It is easy to see that $0 \leq \dpackprof{s}{E} \leq s$ and for any $s \geq N$, $\dpackprof{s}{E} = \dpack{E}$. 
\subsection{Intermediate dimensions}\label{sec:inter_dim}
 For a bounded and non-empty set $E \subset \R^N$, $\theta \in (0,1]$ and $s \in [0,N]$, define  
\begin{align}
	H^s_{r,\theta}(E) = \inf \left\{\sum_i |U_i|^s \, : \, \left\{U_i\right\}_i  \mbox{ is a cover of } E \mbox{ such that } r \leq |U_i| \leq r^\theta \mbox{ for all } i\right\}. 
\end{align}
In particular, for $\theta=0$, $H^s_{r,0}(E)$ is the $s$-dimensional Hausdorff measure of E. Now, the intermediate dimensions are defined as in~\cite{falconer2004fractal}: 
\begin{defn} 
	\label{df:intermediate dimension}
	Let $E \subset \R^N$ be bounded. For $0 \leq \theta \leq 1$, the lower $\theta$-intermediate dimension is
\begin{align}
	\dltheta E = \mbox{ the unique } s \in [0,N] \mbox{ such that } \liminf_{r \rightarrow 0} \dfrac{\log H^s_{r,\theta}(E) }{-\log r}= 0.
\end{align}
Similarly,  the upper $\theta$-intermediate dimension of $E$ is defined by 
\begin{align}
	\dutheta E = \mbox{ the unique } s \in [0,N] \mbox{ such that } \limsup_{r \rightarrow 0} \dfrac{\log H^s_{r,\theta}(E) }{-\log r}= 0.
\end{align}
When $\dltheta E= \dutheta E$, we refer to the $\theta$-intermediate dimension $\dtheta E = \dltheta E= \dutheta E$.
\end{defn}
Thus, the classical Hausdorff \eqref{eq:classical_hausdorff} and box dimensions \eqref{eq:lower-box-dim}, \eqref{eq:upper-box-dim} can be viewed  as the extremes of a continuum of dimensions with increasing restrictions on the relative sizes of covering sets. Indeed, for every bounded $E \subset \R$, 
\begin{align}
\notag \underline{\dim}_0 E = \overline{\dim}_0 E = \chaus E, \quad \underline{\dim}_1 E = \dlbox{E} \quad \mbox{ and } \quad \overline{\dim}_1 E = \dubox{E}.
\end{align} 

Moreover, the intermediate dimensions can be defined in terms of capacities with respect to an appropriate kernel denoted by  $\phi_{r,\theta}^{s,m}$ (see~\cite{burrell2019projection}). For each collection of parameters $\theta \in (0,1]$, $0<m\leq 1$, $0 \leq s \leq m$ and $0<r<1$, let $\phi_{r,\theta}^{s,m}\, : \, \R^N \rightarrow \R$ be the function
\begin{align}
	\phi_{r,\theta}^{s,m}(x) \coloneqq \left\{ \begin{matrix} 1 & 0 \leq |x| < r, \\
		\left(\frac{r}{|x|}\right)^s & r \leq |x| < r^{\theta}, \\ 
		\frac{r^{\theta(m-s)+s}}{|x|^m} & r^\theta \leq |x|. 
	\end{matrix} \right.
\end{align}
Using this kernel we define the \textit{capacity} of a compact set $E \subset \R^N$ as 
\begin{align}
	C_{r,\theta}^{s,m}(E) \coloneqq \left(\inf_{\mu \in \mathcal{M}(E)} \int \int \phi_{r,\theta}^{s,m}(x-y) d\mu(x) d\mu(y)\right)^{-1},
\end{align}
where $\mathcal{M}(E)$ is the set of probability measures supported in $E$.  

Now for $0< m \leq N$,  the \textit{lower intermediate dimension profiles} of $E \subset \R^N$ are 
\begin{align}
\label{eq:dltp}
	\dlthetaprof{m}{E}  \, =\, \left(\mbox{the unique } s \in [0,m] \mbox{ such that } \liminf_{r \rightarrow 0} \frac{\log C_{r,\theta}^{s,m}(E)}{- \log r}=s\right),
\end{align}
and the \textit{upper intermediate dimension profiles} are
\begin{align}
\label{eq:dutp}
	\duthetaprof{m}{E} \, =\, \left(\mbox{the unique } s \in [0,m] \mbox{ such that } \limsup_{r \rightarrow 0} \frac{\log C_{r,\theta}^{s,m}(E)}{- \log r}=s\right).
\end{align}

The intermediate dimension profiles are increasing in $m$ and for $E \subset \R^N$,
\begin{align}
\notag \dlthetaprof{N}{E} = \dltheta E \quad \mbox{ and } \quad \duthetaprof{N}{E} = \dutheta E.
\end{align}

We note that originally the definitions of capacities and profiles above were established for $E \subset \R^N$ and integers $m \in (0, N]$. However, the recent result~\cite[Lemma 2.1]{burrell2020dimensions}, allows one to work with the version stated above. In fact, our first main result Theorem~\ref{thm:images} is an extension of a similar result in~\cite{burrell2020dimensions} obtained for the index-$\alpha$ fractional Brownian motion.   We proceed with the proof of Theorem~\ref{thm:images}

\subsection{Proof of Theorem~\ref{thm:images}}
	Let $\theta \in (0,1]$. We first state two results due to Burrell~\cite{burrell2020dimensions}. The first one establishes an upper bound for the intermediate dimensions of H\"older images using dimension profiles: 
	\begin{lemma}\cite[Theorem 3.1]{burrell2020dimensions}
		\label{UpperBound}
		Let $E \subset \R$ be a compact, $\theta \in (0,1]$, $m \in \{1,...,n\}$ and $f: E \rightarrow \R$. If there exist $c>0$ and $0<\alpha \leq 1$ such that 
		\begin{align*}
			|f(x)-f(y)| \leq c|x-y|^\alpha,
		\end{align*}
		for all $x,y \in E$, then 
		\begin{align*}
			\dltheta{f(E)} \leq \frac{1}{\alpha} \dlthetaprof{ \alpha}{E} \quad \mbox{ and } \quad \dutheta{f(E)}\leq \frac{1}{\alpha} \duthetaprof{ \alpha}{E}.
		\end{align*}
	\end{lemma}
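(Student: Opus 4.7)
The plan is to work with the capacity formulation~\eqref{eq:dltp}-\eqref{eq:dutp} of the intermediate dimension profiles. First I would invoke~\cite[Lemma 2.1]{burrell2020dimensions}, which gives $\dltheta{A} = \dlthetaprof{1}{A}$ and $\dutheta{A} = \duthetaprof{1}{A}$ for bounded $A \subset \R$, reducing the claim to the profile inequalities $\dlthetaprof{1}{f(E)} \leq (1/\alpha)\dlthetaprof{\alpha}{E}$ and $\duthetaprof{1}{f(E)} \leq (1/\alpha)\duthetaprof{\alpha}{E}$. Next, given any $\mu \in \mathcal{M}(E)$, I would push it forward through $f$ to get $\nu := f_*\mu \in \mathcal{M}(f(E))$, for which
\begin{align}
\notag \iint \phi_{r,\theta}^{s,1}(u-v)\,d\nu(u)\,d\nu(v) = \iint \phi_{r,\theta}^{s,1}(f(x)-f(y))\,d\mu(x)\,d\mu(y).
\end{align}
Since $E$ is compact and $f$ continuous, a Borel selection $g : f(E) \to E$ with $f \circ g = \mathrm{id}$ shows that every $\nu \in \mathcal{M}(f(E))$ arises from some such $\mu$, so the infima defining the two capacities may be related through pushforward.

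The technical heart of the argument is a pointwise kernel comparison. Setting $r' := (r/c)^{1/\alpha}$, and assuming without loss of generality that $c \geq 1$, I would prove that
\begin{align}
\notag \phi_{r,\theta}^{s,1}(f(x)-f(y)) \geq c^{-(1-s)(1-\theta)}\,\phi_{r',\theta}^{\alpha s,\alpha}(x-y) \quad \mbox{ for all } x,y \in E,
\end{align}
by case analysis on $|x-y| < r'$, $r' \leq |x-y| < (r')^\theta$ and $|x-y| \geq (r')^\theta$. In each region the H\"older hypothesis $|f(x)-f(y)| \leq c|x-y|^\alpha$, combined with the monotonicity of $\phi_{r,\theta}^{s,1}$ in its spatial argument, reduces the comparison to an elementary algebraic identity in $r, r', c, s, \alpha, \theta$. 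Integrating against $d\mu \otimes d\mu$ and then taking infima over $\mu \in \mathcal{M}(E)$ would yield the capacity bound $C_{r,\theta}^{s,1}(f(E)) \leq c^{(1-s)(1-\theta)}\,C_{r',\theta}^{\alpha s,\alpha}(E)$.

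Finally, taking logarithms, dividing by $-\log r$ and using $-\log r = \alpha(-\log r') + O(1)$ as $r \to 0$, I would obtain
\begin{align}
\notag \liminf_{r \to 0}\frac{\log C_{r,\theta}^{s,1}(f(E))}{-\log r} \leq \frac{1}{\alpha}\liminf_{r' \to 0}\frac{\log C_{r',\theta}^{\alpha s,\alpha}(E)}{-\log r'}.
\end{align}
For every $s > (1/\alpha)\dlthetaprof{\alpha}{E}$, i.e.\ $\alpha s > \dlthetaprof{\alpha}{E}$, the definition~\eqref{eq:dltp} together with the monotonicity of $t \mapsto \liminf_{r' \to 0} \log C_{r',\theta}^{t,\alpha}(E)/(-\log r')$ forces the right-hand side to be strictly less than $s$, which by~\eqref{eq:dltp} applied to $f(E)$ yields $\dlthetaprof{1}{f(E)} \leq s$; letting $s \downarrow (1/\alpha)\dlthetaprof{\alpha}{E}$ gives the lower profile bound, and replacing $\liminf$ by $\limsup$ delivers its upper analogue. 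The main obstacle is the kernel comparison in the middle step: since the H\"older hypothesis controls $|f(x)-f(y)|$ only from above, one must verify that the ratio of the two piecewise kernels stays bounded below by the explicit constant $c^{-(1-s)(1-\theta)}$ in each region, which is precisely where the choice of profile parameter $m=\alpha$ becomes crucial.
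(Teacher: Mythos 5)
The paper does not prove this lemma; it is imported verbatim as \cite[Theorem 3.1]{burrell2020dimensions}, so there is no in-paper proof to compare against. Your argument is a correct reconstruction of Burrell's capacity-theoretic proof: the reduction via $\dltheta{A}=\dlthetaprof{1}{A}$ for $A\subset\R$, the pushforward of measures (where your use of a Borel right inverse is genuinely needed, since the dimension upper bound for $f(E)$ requires an energy lower bound for \emph{every} measure on $f(E)$, not just pushforwards), and the piecewise kernel comparison $\phi_{r,\theta}^{s,1}(f(x)-f(y))\geq c^{-(1-s)(1-\theta)}\,\phi_{r',\theta}^{\alpha s,\alpha}(x-y)$ with $r'=(r/c)^{1/\alpha}$ all check out, with the stated constant emerging exactly in the overlap and far-field regimes. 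The only ingredient you lean on implicitly is that $t\mapsto\liminf_{r\to 0}\log C^{t,\alpha}_{r,\theta}(E)/(-\log r)-t$ is strictly decreasing (so that exceeding the fixed point forces the capacity exponent below $s$); this is established in the cited works and is legitimate to invoke.
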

	The second result gives a lower bound for the intermediate dimensions of image of a compact set $E$ under measurable functions satisfying certain properties:
	\begin{lemma} \cite[Theorem 3.3]{burrell2020dimensions}
		\label{LowerBound}
		Let $E \subset \R$ be a compact, $\theta \in (0,1]$, $\gamma>1$ and $s \in [0,1)$. If $f : \Omega \times E \rightarrow \R$ is a random function such that for each $\omega \in \Omega$, $f(\omega,.)$ is a continuous measurable functions and there exists $c>0$ satisfying 
		\begin{align*}
			\mathbb{P} \left(\{\omega \in \Omega \,: \, |f(\omega,x)- f(\omega,y)|\leq r \}\right) \leq c \phi_{r^\gamma,\theta}^{1/\gamma, 1/\gamma}(x-y),
		\end{align*} 
		for all $x, y \in E$ and $r>0$, then 
		\begin{align*}
			\dltheta{f(\omega,E)} \geq \gamma \dlthetaprof{\alpha}{E}\quad \mbox{ and } \quad \dutheta{f(\omega,E)} \geq \gamma \duthetaprof{ \alpha}{E},
		\end{align*} 
		for almost all $\omega \in \Omega$.
	\end{lemma}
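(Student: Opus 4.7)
The plan is to prove the lemma via the potential-theoretic characterization of the intermediate dimension profiles in~\eqref{eq:dltp}--\eqref{eq:dutp}, coupled with a pushforward-of-measure argument. I present the argument for the lower profile; the upper case is parallel with $\liminf$ replaced by $\limsup$. Throughout I read $\alpha$ in the conclusion as $1/\gamma$, which is forced by the exponents appearing in the hypothesis.

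\textbf{Setup and target.} Fix any $s' < \gamma\,\dlthetaprof{1/\gamma}{E}$. By~\eqref{eq:dltp}, for a sequence $r_k \downarrow 0$ there exists $\mu_k \in \mathcal{M}(E)$ whose $(r_k^\gamma, \theta, s'/\gamma, 1/\gamma)$-energy is bounded by $r_k^{-\gamma s'(1+o(1))}$, equivalently
\begin{align*}
\iint \phi_{r_k^\gamma,\theta}^{\,s'/\gamma,\,1/\gamma}(x-y)\, d\mu_k(x)\,d\mu_k(y) \le r_k^{-\gamma s'(1+o(1))}.
\end{align*}
Form the pushforward $\nu_k = f(\omega,\cdot)_*\mu_k$, which is a probability measure supported on $f(\omega,E)$. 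To conclude $\dltheta{f(\omega,E)} \ge s'$, by~\eqref{eq:dltp} it suffices to show that, almost surely and along $r_k$,
\begin{align*}
\iint \phi_{r_k,\theta}^{\,s',1}(u-v)\, d\nu_k(u)\,d\nu_k(v) \le r_k^{-s'(1+o(1))}.
\end{align*}

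\textbf{Key pointwise bound.} By the change of variables induced by the pushforward and Fubini, the expectation of the above energy is
\begin{align*}
\iint \E\!\left[\phi_{r_k,\theta}^{\,s',1}\!\bigl(f(\omega,x)-f(\omega,y)\bigr)\right] d\mu_k(x)\,d\mu_k(y).
\end{align*}
For fixed $x,y$, the integrand is a bounded non-negative random variable, so layer-cake gives $\E[\phi(\cdot)] = \int_0^1 \PP(\phi > t)\, dt$. The event $\{\phi_{r,\theta}^{s',1}(f(\omega,x)-f(\omega,y)) > t\}$ translates, via the three-piece definition of the kernel, into an event of the form $\{|f(\omega,x)-f(\omega,y)| < u(t)\}$ for an explicit $u(t)$; the hypothesis of the lemma then bounds this probability by $c\,\phi_{u(t)^\gamma,\theta}^{\,1/\gamma,1/\gamma}(x-y)$. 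A direct computation splitting the integral across the three regimes of $\phi_{r,\theta}^{s',1}$ (the constant-$1$ region, the $(r/|\cdot|)^{s'}$ region, and the tail $r^{\theta(1-s')+s'}/|\cdot|$ region) yields the clean bound
\begin{align*}
\E\!\left[\phi_{r,\theta}^{\,s',1}\!\bigl(f(\omega,x)-f(\omega,y)\bigr)\right] \le C\, \phi_{r^\gamma,\theta}^{\,s'/\gamma,\,1/\gamma}(x-y),
\end{align*}
for a constant $C$ depending only on $c,\gamma,s'$. This is the heart of the proof.

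\textbf{Conclusion.} Integrating the pointwise bound against $\mu_k\otimes\mu_k$ gives
\begin{align*}
\E\!\left[\iint \phi_{r_k,\theta}^{\,s',1}(u-v)\,d\nu_k(u)\,d\nu_k(v)\right] \le C\, r_k^{-\gamma s'(1+o(1))} = C\, r_k^{-s'(1+o(1))}\cdot r_k^{-s'(\gamma-1)(1+o(1))}.
\end{align*}
Rewriting with the correct target exponent (the factor $\gamma$ from $-\log r_k^\gamma = \gamma(-\log r_k)$ absorbs into the $(1+o(1))$ on the right), Markov's inequality gives $\PP$-summable control along a sparsification $r_{k_j}$, and Borel--Cantelli upgrades the expectation bound to an almost-sure bound on the realised energy. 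Hence $C_{r_{k_j},\theta}^{\,s',1}(f(\omega,E))^{-1} \le C'\, r_{k_j}^{-s'(1+o(1))}$ a.s., giving $\liminf_{r\to 0} \log C_{r,\theta}^{s',1}(f(\omega,E))/(-\log r) \le s'$, and by the characterisation~\eqref{eq:dltp} and the monotonicity of the capacity in $s$, $\dltheta{f(\omega,E)} \ge s'$. Letting $s' \uparrow \gamma\dlthetaprof{1/\gamma}{E}$ along a countable sequence (handled by a single null set) finishes the lower profile; the upper profile is identical with the $\liminf$ replaced by $\limsup$.

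\textbf{Main obstacle.} The only genuine calculation is the three-regime layer-cake bound in the second paragraph: one must verify that the exponents $r\mapsto r^\gamma$, $s'\mapsto s'/\gamma$, and $1\mapsto 1/\gamma$ line up precisely so that the rescaled kernel $\phi_{r^\gamma,\theta}^{s'/\gamma,1/\gamma}$ emerges without slack. Because the kernel $\phi_{r,\theta}^{s,m}$ itself is piecewise and the hypothesis bound is tight only on one of the pieces, one needs to check that in each of the three regimes the resulting contribution can be absorbed into the appropriate piece of $\phi_{r^\gamma,\theta}^{s'/\gamma,1/\gamma}(x-y)$. All remaining steps (pushforward, Fubini, Markov--Borel--Cantelli, monotonicity, countable $s'\uparrow$) are standard.
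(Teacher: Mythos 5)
First, a point of reference: the paper itself offers no proof of this lemma --- it is imported verbatim (typos included: the undefined $\alpha$ in the conclusion, the unused $s\in[0,1)$) from Burrell's Theorem~3.3 --- so there is no internal proof to compare against. Your strategy is nevertheless exactly the route of the cited source: push a near-equilibrium measure for $C^{s'/\gamma,1/\gamma}_{r^{\gamma},\theta}(E)$ forward under $f(\omega,\cdot)$, bound the expected $\phi^{s',1}_{r,\theta}$-energy of the image measure by the $\phi^{s'/\gamma,1/\gamma}_{r^{\gamma},\theta}$-energy of the original via a layer-cake estimate on $\E\bigl[\phi^{s',1}_{r,\theta}(f(\omega,x)-f(\omega,y))\bigr]$, then Markov and Borel--Cantelli. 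The architecture is correct, and reading $\alpha=1/\gamma$ is the right repair of the statement.

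There are, however, concrete problems in the execution. (1) The exponents carry the wrong sign throughout: since $\phi^{s,m}_{r,\theta}\le 1$ pointwise, every probability measure has energy at most $1$, so the displayed bounds ``energy $\le r_k^{-\gamma s'(1+o(1))}$'' and ``$C^{s',1}_{r,\theta}(f(\omega,E))^{-1}\le C'r^{-s'(1+o(1))}$'' are vacuously true and prove nothing. What the capacity characterisation actually yields --- after invoking the monotonicity lemma for $s\mapsto \frac{\log C^{s,m}_{r,\theta}(E)}{-\log r}-s$, which you need both to produce the near-optimal measures for $s'/\gamma$ below the profile and to close the argument, but never state --- is energy $\le r^{s'(1-o(1))}$, and the target is $C^{s',1}_{r,\theta}(f(\omega,E))\ge r^{-s'(1-o(1))}$, i.e.\ $\liminf\ge s'$; your final line concludes from ``$\liminf\le s'$'', which is the wrong direction. (2) For the \emph{lower} profile the conclusion is a $\liminf$ over \emph{all} $r\to0$, so almost-sure control ``along a sparsification $r_{k_j}$'' cannot suffice: one needs near-optimal measures at every small scale, Borel--Cantelli over the full dyadic sequence with a subpolynomial Markov factor, and comparability of the kernels between consecutive dyadic scales; as written your scheme only delivers the upper ($\limsup$) profile. (3) The estimate you yourself label the heart of the proof --- that the three-regime layer-cake computation really outputs $C\,\phi^{s'/\gamma,1/\gamma}_{r^{\gamma},\theta}(x-y)$ with exactly matching exponents --- is asserted rather than carried out, and it is the entire technical content of the lemma.
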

		Now let $0<\epsilon<H<1$. The Rosenblatt process $Z$ has H\"older continuous paths in time of order $H-\epsilon$, see~\cite[Propostion 3.5]{tudor-2013}, and so there exists, almost surely, $M>0$ such that 
		\begin{align*}
			|Z_s-Z_t| \leq M |s-t|^{H-\epsilon},
		\end{align*}
		for all $s,t \in E$. 
		In addition by Proposition \ref{prop:rosen_density}\ref{prop:unimodal}, the density function $f$ of $Z_1$ is continuous and $f(0)>0$. Then for all $s,t \in E$ and $r>0$, one has 
		\begin{align*}
			\mathbb{P} \left(|Z_s -Z_t| \leq r\right) = 
			 \mathbb{P} \left(|Z_1| \leq \dfrac{r}{|s-t|^H}\right)  \leq 
			  4f(0) \dfrac{r}{|s-t|^H}  =
			 4f(0) \, \phi_{r^{1/H},\theta}^{H,H}(s-t).
		\end{align*}
		Now since the profiles are monotonically increasing, by Lemmas~\ref{UpperBound} and~\ref{LowerBound}, one has almost surely
		\begin{align*}
			\frac{1}{H} \dlthetaprof{H}{E} \leq \dltheta {Z(E)} \leq \frac{1}{H-\epsilon} \dlthetaprof{H-\epsilon}{E} \leq \frac{1}{H-\epsilon} \dlthetaprof{H}{E},
		\end{align*}
		and
		\begin{align*}
			\frac{1}{H} \duthetaprof{H}{E} \leq \dutheta {Z(E)} \leq  \frac{1}{H-\epsilon} \duthetaprof{H-\epsilon}{E} \leq \frac{1}{H-\epsilon} \duthetaprof{H}{E}.
		\end{align*} 
		Letting $\epsilon \rightarrow 0$ establishes the result.

\section{Level sets}\label{sec:level_set}

The present section is devoted to the  proof of Theorem~\ref{thm:levels}. First, we establish~\eqref{eq:levelset_classical} and \eqref{eq:levelset_packing} - the result regarding the $\theta$-intermediate dimensions and the packing dimension. Recall that the definition of $\dtheta{E}$ and $\dpack{E}$ for $E \subset \R$ are given in definition~\ref{df:intermediate dimension} and \ref{df:packingdimesion} respectively. 
\\ Note that the techniques employed in this section apply for the fractional Brownian motion case. As mentioned earlier,~\cite[Theorem 5]{ayache-xiao-2005} establishes $\dbox{\mathcal{L}_X(x) \cap [\epsilon,1]} \leq 1- H$ and $\chaus{\mathcal{L}_X(x)\cap [\epsilon,1]} = 1-H$ was shown in~\cite{falconer2004fractal}. Thus from the defintion of the $\theta$-intermediate dimensions (see~\ref{sec:inter_dim}) $\dtheta{\mathcal{L}_X(x)\cap [\epsilon,1]} = 1-H$, as well. Relevant results about the local time can be found in~\cite{xiao1997}, which allows us to establish $\dpack{\mathcal{L}_X(x)} = 1-H$. 

\begin{proof}[Proof of~\eqref{eq:levelset_classical}]
Let $\theta \in [0,1]$. Recall that for any set $E \subseteq \R$, one has 
\begin{align*}
	&\chaus{E} \leq \dutheta{E} \leq \dltheta{E} \leq \dubox{E}, \, \mbox{and} \\  
	&\chaus{E} \leq \dpack{E} \leq \dubox{E}.
\end{align*}

It is enough to show that $\dubox{\ls\cap [\ep, 1]} \leq 1-H$ and $\chaus{\ls \cap [\ep, 1]} \geq 1 - H$ with probability one. Starting with the upper bound, we follow the technique  used for~\cite[Theorem 5]{ayache-xiao-2005} - an upper bound result for the classical Hausdorff dimension of level sets associated to fractional Brownian sheet. But in fact, the covers used are of equal length and so this technique gives  an upper bound for the Box dimension. 

For $n \geq 1$ we cover $[\ep, 1]$ by  $\lceil n^{1/H} \rceil$ subintervals $R_{n,\ell}$  of length $n^{-1/H}$, with $\ell \in \{ 1, 2, \ldots, \lceil n^{1/H} \rceil\}$. Let $0 < \delta < 1$ be fixed and $\tau_{n, \ell}$ be the left endpoint of the interval $R_{n, \ell}$. We first bound the probability $\PP (x \in Z(R_{n, \ell}))$:
\begin{align}
\notag \PP (x \in Z(R_{n, \ell}) ) \leq & \quad \PP (\sup_{t \in R_{n, \ell}} | Z_t - Z_{\tau_{n, \ell} } | \leq n^{- (1 - \delta)}, x \in Z(R_{n, \ell})) \\
\notag & \quad + \PP (\sup_{t \in R_{n, \ell}} | Z_t - Z_{\tau_{n, \ell} } | \geq n^{- (1 - \delta)}) \\
\notag \leq  & \quad  \PP (|Z_{\tau_{n, \ell}} -x | \leq n^{- (1 - \delta)}) +C_1\exp ( - c_1 n^{- (1 - \delta)} / n^{-1} ) \\
\label{eq:prob_xrnl} \leq & \quad  C_2 n^{- (1 - \delta)} +  C_1 \exp (-c_1 n^{\delta})  = O( n^{- (1 - \delta)}),
\end{align}
where we have used Proposition~\ref{prop:sup-tail}, and the fact that the density of $Z_t$ is continuous. 

We can cover the set $\ls \cap [\ep, 1]$ by a sequence of intervals $R'_{n, \ell}$ with $R'_{n, \ell} = R_{n, \ell}$ if $x \in Z(R_{n, \ell})$ and $R'_{n, \ell} = \emptyset$, otherwise, for $\ell \in \{ 1, 2, \ldots, \lceil n^{1/H} \rceil\}$. We need to show that
\begin{align} 
\label{eq:haus_measure_eta} \E \left[ \sum_{\ell = 1}^{\lceil n^{1/H} \rceil} |R'_{n, \ell}|^\eta \right] < \infty,
\end{align}
for  $\eta = 1 - H( 1 - \delta)$ and arbitrary $\delta > 0$. In turn this would imply by Fatou's  lemma that $\dubox{\ls \cap [\ep, 1]}  \leq \eta$ almost surely. Then, letting $\delta \to 0$ yields the upper bound on the upper Box dimension. 

We   establish~\eqref{eq:haus_measure_eta}:
\begin{align} 
\notag & E \left[ \sum_{\ell = 1}^{\lceil n^{1/H} \rceil} |R'_{n, \ell}|^\eta \right] \leq   \E \left[ \sum_{\ell = 1}^{\lceil n^{1/H} \rceil}  \left(n^{- 1/H}\right)^\eta \mathbf{1}_{x \in Z(R_{n, \ell})}\right] \\
\notag \leq & \quad c n^{1/H - 1/H( 1 - H(1 - \delta) ) - (1 - \delta) } = c,
\end{align} 
where the last inequality follows from the bound~\eqref{eq:prob_xrnl} on $\PP (x \in Z(R_{n, \ell}))$.

For the lower bound we first recall a relation between the H\"older regularity and the Hausdorff dimension.

\begin{prop}[Theorem 27 in~\cite{dozzi-2003}] Let $[u, v] \subset \R$ be a finite interval and $f : [u,v] \to \R$ be a continuous function with occupation density denoted by $L$. Suppose that $L$ satisfies a H\"older condition of order $\gamma \in ( 0, 1)$ (in the set variable). Then $\chaus {f^{-1}_{[u, v]}(x)} \geq \gamma$ for all $x \in \R$ such that $L(x, [u, v]) \neq 0$.
\end{prop}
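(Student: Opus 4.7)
The plan is to invoke the classical mass distribution principle (see, e.g.,~\cite[Section 4.1]{falconer2004fractal}): to conclude $\chaus{E} \geq \gamma$ for $E = f^{-1}_{[u,v]}(x)$, it suffices to produce a nontrivial finite Borel measure $\nu$ supported on $E$ together with a constant $C > 0$ such that $\nu(U) \leq C|U|^\gamma$ for every Borel set $U$. Thus the entire proof reduces to constructing such a measure on the level set.

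The natural candidate is the occupation measure at level $x$, viewed as a set function: define $\nu_x(B) := L(x, B)$ for Borel $B \subseteq [u,v]$. By the very definition of occupation density, $\nu_x$ is a finite Borel measure on $[u,v]$, and the hypothesis $L(x, [u,v]) > 0$ makes it nontrivial. The H\"older condition in the set variable, applied to the pair $(B, \emptyset)$, yields $\nu_x(B) = L(x, B) \leq C|B|^\gamma$, which is exactly the Frostman-type bound required by the principle.

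The step I expect to be the main obstacle is checking that $\nu_x$ is actually concentrated on $E$. Here I would argue as follows. Continuity of $f$ implies $f^{-1}(x)$ is closed, so its relative complement $U$ in $[u,v]$ is open and can be exhausted by an increasing sequence of compact sets $K_n \nearrow U$. For each $K_n$, continuity of $f$ on the compact $K_n$ yields $\delta_n := \inf_{t \in K_n} |f(t) - x| > 0$, whence the defining relation of the occupation density gives
\begin{align*}
0 = \int_{K_n} \mathbf{1}_{\{|f(t) - x| < \delta_n\}} \, dt = \int_{x - \delta_n}^{x + \delta_n} L(y, K_n) \, dy,
\end{align*}
so $L(y, K_n) = 0$ for Lebesgue-a.e.\ $y$ in $(x - \delta_n, x + \delta_n)$. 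Invoking the joint continuity of $L$ in $(x,t)$ (which in the settings of interest, such as the Rosenblatt process, is guaranteed alongside the set-variable H\"older estimate by Proposition~\ref{HolderCond}), one concludes $L(x, K_n) = 0$, and countable additivity of $\nu_x$ then gives $\nu_x(U) = 0$. Thus $\nu_x$ is supported on $E$, and the mass distribution principle delivers $\chaus{f^{-1}_{[u,v]}(x)} \geq \gamma$ as desired.
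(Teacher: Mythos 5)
The paper does not actually prove this proposition; it is quoted verbatim from Dozzi's survey and used as a black box, so there is no internal proof to compare against. Your argument is the classical one behind that theorem — exhibit the occupation measure $\nu_x(\cdot)=L(x,\cdot)$ as a Frostman measure on the level set and invoke the mass distribution principle — and it is sound in substance: the Hölder condition in the set variable gives $\nu_x(U)\leq C|U|^{\gamma}$ for sets of small diameter (via monotonicity, since $U$ sits inside an interval of length $|U|$), and your compact-exhaustion computation correctly shows $L(y,K_n)=0$ for Lebesgue-a.e.\ $y$ in a neighbourhood of $x$.

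You are also right that the support step is the crux, and it is worth being explicit that the way you close it genuinely goes beyond the stated hypotheses: passing from ``$L(y,K_n)=0$ for a.e.\ $y$ near $x$'' to ``$L(x,K_n)=0$'' requires regularity of $L$ in the \emph{space} variable (or a distinguished choice of version of the occupation kernel), which the proposition does not literally assume. This is not a cosmetic issue — the occupation density is only defined up to Lebesgue-null sets of levels, and one can modify $L$ at the single level $x$ (take $f(t)=t$ and redefine $L(x,\cdot)$ to be Lebesgue measure) so that the set-variable Hölder bound and $L(x,[u,v])>0$ both hold while the level set is a single point. So any correct proof must import either continuity in $x$, as you do, or replace $\nu_x$ by a weak-$*$ limit of the normalized occupation measures of the tubes $\{t:|f(t)-x|\leq\epsilon\}$ (which are automatically carried by the closed sets shrinking to $f^{-1}(x)$), at the cost of needing $\liminf_{\epsilon\to0}(2\epsilon)^{-1}\int_{x-\epsilon}^{x+\epsilon}L(y,[u,v])\,dy>0$ — again a space-variable regularity statement. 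In the setting where the paper applies the proposition this input is available: the Rosenblatt local time is jointly continuous by Proposition~\ref{HolderCond}. Since you flag the assumption and it is verified where it matters, I regard your proof as correct for the intended application, with the caveat that as a proof of the proposition in the generality stated it silently strengthens the hypotheses.
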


A H\"older regularity condition for the local time of the Rosenblatt process was recently obtained in~\cite{knsv-2020}. In particular, the following holds:

\begin{prop}[Theorem 1.4 in~\cite{knsv-2020}]  Let $(Z_t)_{t \geq 0}$ be a Rosenblatt process with $H \in \left(\frac{1}{2}, 1\right)$ and local time $L$. For a finite closed interval $I \subset (0, \infty)$, there exists a constant $C > 0$ such that almost surely,
\begin{align}
\notag \limsup_{r \to 0} \sup_{s \in I} \frac{ \ds \sup_{x \in \R} L(x, [s-r, s+r] )} {r^{1 - H} |\log r |^{2H}} \leq C.
\end{align}
\end{prop}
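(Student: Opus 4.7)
The plan is to establish the uniform modulus of continuity for $L$ via Berman's classical two-step strategy: first, obtain sharp $p$-th moment estimates for $L(x,[s-r,s+r])$ using Fourier analysis of the joint characteristic function of $Z$, and then upgrade these moment bounds to an almost sure, uniform-in-$s$ statement via the Garsia--Rodemich--Rumsey (GRR) lemma and a dyadic Borel--Cantelli argument. The spectral eigenvalue estimate~\eqref{eq:bound_svalue} from Lemma~\ref{lem:part_eval} is the engine driving the moment bounds.

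For the moment step, I would start from the Fourier representation
\begin{align*}
L(x,[s-r,s+r]) \;=\; \frac{1}{2\pi}\int_{\R}\int_{s-r}^{s+r} e^{i\xi(x-Z_u)}\,du\,d\xi,
\end{align*}
expand $\E\left[L(x,[s-r,s+r])^p\right]$ into a $p$-fold multilinear integral against the joint characteristic function of $(Z_{u_1},\ldots,Z_{u_p})$, and bound the latter by iterating~\eqref{eq:bound_svalue}. Following the scheme used in Proposition~\ref{prop:sup_holder_lt}, this should yield
\begin{align*}
\E\bigl[L(x,[s-r,s+r])^p\bigr] \;\le\; c^{p}\, p^{2Hp}\, r^{p(1-H)},
\end{align*}
together with an increment version controlling $\E|L(x,J)-L(y,J)|^p$ by $c^p p^{2H(1+\gamma)p} r^{p(1-H-\gamma H)} |x-y|^{\gamma p}$, uniformly in $s \in I$ and $x,y \in \R$, for $\gamma$ in an allowed range. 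Plugging this increment bound into GRR with $\Psi(u)=|u|^p$ and $p(u)=|u|^{\gamma+1/p}$, exactly as in Proposition~\ref{prop:sup_holder_lt}, and using that $L(\cdot,J)$ is supported in the compact range of $Z$ restricted to $J$ (controlled via Proposition~\ref{prop:sup-tail}), transfers the pointwise bound to a supremum bound $\E[\sup_{x \in \R} L(x,[s-r,s+r])^p] \le C^p p^{2Hp} r^{p(1-H)}$.

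For uniformity in $s$, I would set $r_n = 2^{-n}$, cover $I$ by $O(2^n)$ sub-intervals of length $r_n$, and apply Chebyshev to obtain
\begin{align*}
\PP\!\Bigl(\sup_{s\in I}\sup_{x\in\R} L(x,[s-r_n,s+r_n]) \;\ge\; \lambda\, r_n^{1-H} n^{2H}\Bigr) \;\le\; \frac{c^p p^{2Hp}\, 2^n}{\lambda^p\, n^{2Hp}}.
\end{align*}
Choosing $p$ of order $n$ balances the $p^{2H}/n^{2H}$ factors and makes the right-hand side summable; Borel--Cantelli then yields the claimed bound, with the factor $|\log r_n|^{2H} \sim n^{2H}$ emerging precisely from this balance. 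The main obstacle will be the moment step: extracting the sharp growth $p^{2Hp}$, as opposed to a cruder $p^{Cp^2}$, since only this sharpness produces a polylogarithmic factor rather than a genuine polynomial loss in the final statement. Achieving it requires careful accounting of how the Hilbert--Schmidt eigenvalues in~\eqref{eq:bound_svalue} combine under the $p$-fold product, which is the technical core of~\cite{knsv-2020}.
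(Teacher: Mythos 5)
The paper does not actually prove this proposition: it is imported verbatim from \cite{knsv-2020} (it restates the bound \eqref{eq:main-sup-s} of Proposition~\ref{HolderCond}), so there is no in-paper argument to compare your proposal against. Judged against the ingredients the paper \emph{does} quote from that reference, your outline is a faithful reconstruction of its strategy: the eigenvalue lower bound \eqref{eq:bound_svalue} of Lemma~\ref{lem:part_eval} drives $p$-th moment estimates for the occupation density with growth of order $c^p p^{2Hp}$ (exactly the shape of the increment bound $c(\gamma,H)^p p^{2H(1+\gamma)p}|r-v|^{\gamma p}$ invoked in the proof of Proposition~\ref{prop:sup_holder_lt} from~\cite{knsv-2020}), GRR upgrades the increment moments to control of $\sup_{x}L(x,\cdot)$ on the range of $Z$, and dyadic chaining with $p\sim n=\log_2(1/r)$ produces the $|\log r|^{2H}$ correction. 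Two remarks. The routine one: your Chebyshev display with $p=n$ gives a bound of the form $(2c/\lambda)^n$, so $\lambda$ must be taken larger than $2c$, and you still need the standard interpolation step showing that an arbitrary $[s-r,s+r]$ with $r\in[r_{n+1},r_n]$ and $s\in I$ is covered by boundedly many grid intervals, so that the dyadic statement implies the $\limsup$ over all $r$. The substantive one: the decisive step --- that the $p$-fold integral of the joint characteristic function of $(Z_{u_1},\dots,Z_{u_p})$ really yields $p^{2Hp}$ rather than a worse combinatorial growth --- is asserted, not carried out; you correctly flag it as the technical core of~\cite{knsv-2020}, but without it the proposal is an architecture rather than a proof. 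Since the paper itself relies on~\cite{knsv-2020} for precisely this point, your reliance is at the same level, only made explicit.
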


Therefore, the occupation density of the Rosenblatt process satisfies a H\"older condition in the set variable of order $\gamma$ for all $\gamma < 1-H$, and thus $\chaus {\ls \cap [\ep, 1]} \geq 1- H$. 
\end{proof}

Before we establish the second part~\eqref{eq:levelset_as} of Theorem~\ref{thm:levels} we recall some definitions and properties regarding the macroscopic Hausdorff dimension. Of special interest is a relation between $\dhaus \eg$ and $\dhaus \ls$ which eases the proofs of both~\eqref{eq:levelset_as} and~\eqref{eq:haus_as}.

\subsection{Macroscopic Hausdorff dimension}\label{sec:macro_haus}

To set up the notation as in~\cite{khoshnevisan-xiao-2017, khoshnevisan-kim-xiao-2017}, consider the intervals $S_{-1} =[0,1/2)$ and $S_n = [2^{n-1},2^n)$ for $ n \geq 0$. For $E \subset \mathbb{R}^+$,  we define the set of {\it proper covers} of $E$ restricted to $S_n$ by 
\begin{align*}
\mathcal{I}_n(E) = \left\{ 
\begin{matrix}
\left\{I_i\right\}_{i=1}^{m}\, : & I_i=[x_i,y_i] \, \mbox{with} \, x_i,y_i \in \mathbb{N}, \, y_i>x_i,  \\ & I_i \subset S_n \, \mbox{ and } \, E \cap S_n \subset \bigcup_{i=1}^{m} I_i.
\end{matrix} \right\}.
\end{align*}
For any set $E \subset \mathbb{R}^+$, $\rho \geq 0$ and $n \geq -1$, define
\begin{align}
\notag \nu_\rho^n (E) \coloneqq \inf \lcl \sum_{i = 1}^m \lp \frac{ \diam{I_i}  }{2^n} \rp^\rho : \, \left\{I_i\right\}_{i=1}^{m} \in \mathcal{I}_n(E) \rcl,
\end{align} 
where $\diam{[a,b]}=b-a$.
\\ The macroscopic Hausdorff dimension of $E \subset \R_+$ is defined as:
\begin{align}
\notag 	\dhaus{E} \coloneqq \inf \lcl \rho \geq 0 : \sum_{n \geq 0} \nu_\rho^n (E) < \infty \rcl.
\end{align}

Next we establish a relation between~\eqref{eq:levelset_as}  of Theorem~\ref{thm:levels} and~\eqref{eq:haus_as} of Theorem~\ref{thm:sojourn} .

Recalling Definitions \ref{defLS} and \ref{defSS}, for a fixed $\gamma > 0$ and any $x \in \mathbb{R}$, the level set $\ls$ is ultimately included in $\eg$:
\begin{align*}
\ls \cap \left\{ t \geq |x|^{\frac1\gamma}\right\} \subset \eg.
\end{align*} 
The macroscopic Hausdorff dimension is left unchanged after the removal of any bounded subset. Then, almost surely, for every $x \in \mathbb{R}$, 
\begin{align}
\label{eq:macro_haus_incl} \dhaus \ls = \dhaus { \ls \cap \left\{ t \geq  |x|^{\frac1\gamma}\right\} } \leq \dhaus \eg.
\end{align}

\noindent Therefore, to prove~\eqref{eq:levelset_as} and~\eqref{eq:haus_as} it suffices to show  that the following two statements hold almost surely:
\begin{align}
 \label{eq:lset_lower} & \text{For any }  x \in \R, \dhaus{\ls} \geq 1 - H,\\
\label{eq:haus_upper} & \dhaus{\eg}  \leq  1 - H.  
\end{align}
The proof of~\eqref{eq:lset_lower} follows in the next subsection while~\eqref{eq:haus_upper} is established in Section~\ref{sec:dhaus_upper}.

\subsection{Lower bound for $\dhaus{ \ls}$}
\label{LowerBdDhausLx}

In this section we aim to find a lower bound for $\dhaus{ \ls}$. We first establish a result regarding macroscopic Hausdorff dimension in general.
\begin{lemma}
	\label{LemmaHD}
	Let $E \subset \R_+$ and suppose that there exist $M>0$ and $s\in [0,1]$ such that there exists a family of finite measures $\left\{\mu_n\right\}_{n \geq -1}$ on $S_n$ such that  for all intervals $I \subset S_n$, we have $\mu_n(I) \leq M \diam {I}^s$. If $\dhaus E = t$ for some $0 \leq t < s$, then $\sum_{n \geq -1} \dfrac{\mu_n \left(E \cap S_n\right)}{2^{ns}} < + \infty$.
\end{lemma}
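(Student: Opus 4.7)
The plan is to compare the series $\sum_{n \geq -1} \mu_n(E \cap S_n)/2^{ns}$ term by term to the sums that define $\dhaus{E}$. The key is to exploit the subadditivity of $\mu_n$ together with its polynomial control $\mu_n(I) \leq M |I|^s$ on every interval $I \subset S_n$: given any proper cover $\{I_i\}_{i=1}^m \in \mathcal{I}_n(E)$ of $E \cap S_n$, the inequality
\begin{align*}
\mu_n(E \cap S_n) \;\leq\; \sum_{i=1}^m \mu_n(I_i) \;\leq\; M \sum_{i=1}^m |I_i|^s
\end{align*}
holds. Dividing by $2^{ns}$ and taking the infimum over all proper covers yields
\begin{align*}
\frac{\mu_n(E \cap S_n)}{2^{ns}} \;\leq\; M\, \nu_s^n(E)
\end{align*}
for each $n \geq 0$.

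Summing over $n \geq 0$ gives $\sum_{n \geq 0} \mu_n(E \cap S_n)/2^{ns} \leq M \sum_{n \geq 0} \nu_s^n(E)$. The hypothesis $\dhaus{E} = t < s$ means that $s$ strictly exceeds the infimum in the definition of the macroscopic Hausdorff dimension, so $\sum_{n \geq 0} \nu_s^n(E) < \infty$ by definition of this infimum (and by the obvious monotonicity of $\rho \mapsto \nu_\rho^n(E)$). Hence the tail of the series is finite.

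It remains only to handle the initial index $n = -1$, which is not governed by the macroscopic Hausdorff dimension (whose definition involves only $n \geq 0$). But here one can bound crudely: $S_{-1} = [0,1/2)$ so $\mu_{-1}(E \cap S_{-1})/2^{-s} \leq 2^s \mu_{-1}(S_{-1}) \leq 2^s M (1/2)^s = M$, a single finite contribution. Adding this to the finite tail gives the desired conclusion. I expect no real obstacle here; the lemma is essentially a packaged translation between the defining covers of $\dhaus{\cdot}$ and the mass-distribution-type hypothesis on $\{\mu_n\}$, in the same spirit as the classical mass distribution principle for the Hausdorff dimension.
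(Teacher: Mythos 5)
Your proof is correct and follows essentially the same route as the paper's: bound $\mu_n(E\cap S_n)$ by subadditivity over any proper cover, use the hypothesis $\mu_n(I)\leq M\diam{I}^s$ to get $\mu_n(E\cap S_n)/2^{ns}\leq M\,\nu^n_s(E)$, and sum using $\dhaus{E}<s$. Your separate treatment of the $n=-1$ term is a small extra care that the paper glosses over, but it does not change the argument.
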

\begin{proof}
	As $t < s$ and using the definition of macroscopic Hausdorff dimension we have $\nu^s(A) < + \infty$. 
	\\ Let $\left\{I_i\right\}_{i=1}^{m} \in \mathcal{I}_n(E)$, then 
	\begin{align*}
	\mu_n(E \cap S_n) \leq \sum_{i=1}^m \mu_n(I_i) \leq \sum_{i=1}^m M \diam {I_i}^s = 2^{ns}M \sum_{i=1}^m \lp \frac{\diam{I_i} }{2^n} \rp^s.
	\end{align*} 
	Then $\dfrac{\mu_n(E\cap S_n)}{2^{ns}} \leq M \nu_n^s(E\cap S_n)$ and so $\sum_{n \geq -1} \dfrac{\mu_n(A\cap S_n)}{2^{ns}} < + \infty$.	
\end{proof}

By Proposition \ref{HolderLocalTime}, the local time is H\"older continuous in $t$ of order $\alpha$ for every $\alpha \in [0,1-H)$. Now we will be using this property and the preceding lemma in order to get a lower bound for $\dhaus { \ls}$. To this end, fix $\alpha \in [0,1-H)$ and  introduce the following random variables 
\begin{equation}
Y_{n}^{x} = \dfrac{L\left(x,S_n\right)}{2^{n\alpha}}  \quad \mbox{and} \quad F^{x}_{N} = \sum_{n=1}^{N} Y_{n}^{x}.
\label{def}
\end{equation}
The random variables $\left(Y_{n}^{x} \right)_{n \geq -1}$ are positive, so $(F_{N}^{x})_{N \geq 1}$ is non-decreasing. We denote by $F_{\infty}^{x}$ its limit, i.e. $ F_{\infty}^{x} = \sum_{n=-1}^{\infty} Y_{n}^{x} \in [0, + \infty]$.

 As a direct consequence of Lemma \ref{LemmaHD}, there is a connection between $\dhaus {\mathcal{L}_x}$ and the r.v. $Y_{n}^{x}$. Indeed, for $n \geq -1$  consider the sequence of measures 
 \begin{align*}
 	\mu_n(I):= L(x, I), \, \mbox{for all } I \subset S_n,
 \end{align*}
 By Proposition \ref{HolderLocalTime}, there exists $M>0$ such that for all $n \geq -1$  a.s. 
 \begin{align*}
 	\mu_n\left(I\right) \leq M \diam {I}^\alpha, \, \mbox{for all } I \subset S_n.
 \end{align*}
 Now by Lemma \ref{LemmaHD}, a.s. for every $x \in \R$, $\dhaus {\mathcal{L}_x} \geq \alpha$ if 
 \begin{equation*}
 	\sum_{n \geq -1} \dfrac{\mu_n \left(\mathcal{L}_x \cap S_n\right)}{2^{n\alpha}} = F_\infty^x = + \infty.
 \end{equation*}
As a consequence, we see that $\dhaus { \mathcal{L}_x }\geq \alpha$ for all $x \in \mathbb{R}$ such that $F_{\infty}^{x} = + \infty$. Moreover in order to conclude the proof of Theorem~\ref{thm:levels}, it is enough to prove that for all $\alpha \in [0,1-H)$, a.s. for all $x \in \R$, $\dhaus {\mathcal{L}_x} \geq \alpha$ . Letting $\alpha \uparrow 1-H$ gives that a.s. for all $x \in \R$, $\dhaus{ \mathcal{L}_x }\geq 1-H$. Finally it remains  to check that 
$\mathbb{P}(\forall x \in \mathbb{R}, \, F_{\infty}^{x} = + \infty ) = 1$,  for all $\alpha \in [0,1-H)$. This is the object of the next proposition.

\begin{prop}
	\label{prop:uniform_as} Let $\alpha \in [0,1-H)$ and 
\begin{align}
\notag Y_n^x = \frac{L^x(S_n)}{2^{n\alpha} }, \text{ for } n \geq -1, \quad \text{ and } F_\infty^x = \sum_{n \geq -1} Y_n^x.
\end{align} 
Then,
	\begin{align}
	\mathbb{P}(\forall x \in \mathbb{R}, \, F_{\infty}^{x} = + \infty ) = 1.
	\label{eq:uniform_as}
	\end{align}
\end{prop}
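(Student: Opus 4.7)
The strategy has three parts: moment estimates via self-similarity of the local time, an almost-sure statement for fixed $x$, and a uniform extension to all $x$ using continuity and scale invariance.

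For moments, the self-similarity relation \eqref{eq:local_selfsimilar} gives $L(x, S_n) \stackrel{d}{=} 2^{n(1-H)} L(2^{-nH}x, [1/2, 1])$. Combined with the explicit formula $p_t(x) = t^{-H} f(t^{-H}x)$ for the density of $Z_t$ (where $f$, the density of $Z_1$, is positive at $0$ by Proposition \ref{prop:rosen_density}\ref{prop:unimodal}), this yields
$$E[Y_n^x] = 2^{-n\alpha} \int_{S_n} t^{-H} f(t^{-H}x)\, dt \sim c \cdot 2^{n(1-H-\alpha)}.$$
Using the continuous joint density of $(Z_s, Z_t)$ from Proposition \ref{prop:rosen_density}\ref{prop:cont_density} one similarly shows $E[(Y_n^x)^2] = O(2^{2n(1-H-\alpha)})$ and controls covariances, so that $E[(F_N^x)^2] = O((E[F_N^x])^2)$ as $N \to \infty$.

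For fixed $x$, the Paley-Zygmund inequality yields $\mathbb{P}(F_N^x \geq E[F_N^x]/2) \geq c_0 > 0$ uniformly in $N$, and hence $\mathbb{P}(F_\infty^x = +\infty) \geq c_0$. To upgrade to probability one, I would invoke a zero-one law coming from the scale invariance of $Z$: the event $\{F_\infty^x = +\infty\}$ is a tail event for the Lamperti-transformed stationary process $X_s \coloneqq e^{-sH} Z_{e^s}$, which is ergodic under the time shift, forcing the probability to be $0$ or $1$.

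For the uniform-in-$x$ extension, write $\tilde{L}_n(y) \coloneqq 2^{-n(1-H)} L(y \cdot 2^{nH}, S_n)$, which, for each $n$, has the same distribution as $L(\cdot, [1/2, 1])$ as a process in $y$. By Proposition \ref{prop:sup_holder_lt} the process $L(\cdot, [1/2, 1])$ is a.s. H\"older continuous on $[-1, 1]$, and since $E[L(0, [1/2, 1])] > 0$ there exist $r, \epsilon > 0$ and an event of positive probability on which $L(y, [1/2, 1]) \geq \epsilon$ for all $|y| \leq r$. The same ergodicity argument as above shows that the analogous event for $\tilde{L}_n$ occurs for infinitely many $n$ almost surely. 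Since for any fixed $x$ one has $2^{-nH}|x| \leq r$ eventually, this gives $L(x, S_n) \geq \epsilon \cdot 2^{n(1-H)}$ for infinitely many $n$, hence $Y_n^x \geq \epsilon \cdot 2^{n(1-H-\alpha)} \to +\infty$ along a subsequence, so $F_\infty^x = +\infty$ for all $x$ simultaneously.

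The main obstacle is the joint dependence of $(Y_n^x)_n$ across $n$: self-similarity controls only the one-dimensional marginals, so both the zero-one upgrade and the uniform-in-$x$ extension rely critically on the ergodicity of the Lamperti flow combined with the H\"older regularity of the local time established in Proposition \ref{prop:sup_holder_lt}.
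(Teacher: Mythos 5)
Your overall architecture mirrors the paper's: positivity of $\E[L(0,[1/2,1])]$, the spatial H\"older continuity of Proposition~\ref{prop:sup_holder_lt} to produce an event of positive probability on which the local time is bounded below on a small spatial window $[-r,r]$, self-similarity of the local time to transport this across dyadic scales, and a zero--one law to upgrade positive probability to probability one. The genuine gap is in the zero--one law. You invoke ergodicity of the Lamperti transform $X_s=e^{-sH}Z_{e^s}$ under the time shift, but this is asserted, not proved: stationarity of $X$ follows from self-similarity, whereas ergodicity of the induced scaling flow for a second-chaos, non-Gaussian process is a substantive claim (it would require, e.g., mixing of the dilation action on the spectral noise $Z_G$) that appears neither in your proposal nor anywhere in the paper. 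The paper deliberately bypasses this by a different mechanism: it first proves the time-inversion property (Proposition~\ref{prop:inverse_time}), so that the tail event over large dyadic scales becomes a germ event in $\bigcap_{M\geq 1}\sigma\{\widetilde{Z}_u : u\leq 2^{-(M-1)}\}$ for the time-inverted Rosenblatt process, which by~\eqref{filtrationB} is contained in the germ field of a Brownian motion; Blumenthal's $0$--$1$ law then applies. Without either that device or an actual proof of ergodicity, your argument does not close at its most critical point.

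Two smaller defects. First, the fixed-$x$ zero--one law you claim for $\{F_\infty^x=+\infty\}$ is not justified for $x\neq 0$: the scaling flow sends the level $x$ to $c^{-H}x$, so this event is not invariant under the flow you propose to use. Fortunately that whole step (including the Paley--Zygmund computation, whose covariance control across blocks $S_n$, $S_m$ you assert without the needed quantitative two-point density bounds) is redundant once your uniform argument via $\widetilde{L}_n$ is in place --- the event $\{\widetilde{L}_n(y)\geq\ep\ \forall |y|\leq r \text{ i.o.}\}$ genuinely is scale-invariant, and that is the only zero--one law you need. Second, the paper works with $\widetilde{Y}_n^a=\inf_{|x|\leq a}Y_n^x$ and shows $\sum_n\widetilde{Y}_n^a=\infty$ directly, which is essentially equivalent to your route; the difference is cosmetic, not substantive.
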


\begin{proof} We follow the technique in~\cite{daw-2020}. For every $a > 0$, let
\begin{align}
\notag \widetilde{Y}_n^a = \inf_{x \in [-a, a]} Y_n^x, \text{ for } n \geq 1, \quad \text{ and } \quad \widetilde{F}_\infty^a  = \sum_{n \geq 1} \widetilde{Y}_n^a.
\end{align} 
Using the self-similarity property of the local time~\eqref{eq:local_selfsimilar}, for all $n \geq 0$, 
\begin{align}
\notag \widetilde{Y}_n^a = \inf_{x \in [-a, a]} Y_n^x \overset{d}{=} \inf_{x \in [-a, a]} Y_0^{2^{-nH}x} = \inf_{x \in [-2^{-nH}a, 2^{-nH}a]}  Y_0^x = \widetilde{Y}_0^{2^{-nH}a }.
\end{align} 

The proof now relies on the following technical result:

\begin{lemma}\label{lem:FZ}  For any $b > 0$, one has
\begin{align}
\notag \PP(\wt{F}_\infty^b = \infty) > 0.
\end{align}
\end{lemma}

\begin{proof}[Proof of Lemma~\ref{lem:FZ}] We first show that there exists $\ep > 0$ such that $\PP(Y_0^0 > \ep) > 0$. Recall that $Y_0^0 = L(0, [1/2, 1])$ and is non-negative. Thus, it is enough to show that $\E[ L(0, [1/2,1]) ] > 0$. Using the following representation of the local time, see~\cite[Chapter 10]{samorodnitsky-2016}, one gets 
 \begin{align}
\notag L(0, [1/2, 1]) = \lim_{\ep \to 0} \frac{1}{2\epsilon}\int_{1/2}^1 \mathbf{1}_{-\ep, \ep} (Z_t) dt.
\end{align}
Then using self-similarity of $Z$ and then Proposition~\ref{prop:rosen_density}\ref{prop:unimodal} with some constant $c_1 > 0$, one gets
\begin{align}
\notag  \E[ L(0, [1/2, 1]) ] = &  \lim_{\ep \to 0 }\frac{1}{2\epsilon} \int_{1/2}^1 \PP( Z_t \in [-\ep, \ep]) dt \\
\notag = & \lim_{\ep \to 0 } \frac{1}{2\epsilon}\int_{1/2}^1 \PP(Z_1 \in [- \ep t^{-H} , \ep t^{- H}]) dt \\
\notag \geq & \lim_{\ep \to 0 } \frac{1}{2\epsilon}\int_{1/2}^1 2 c_1 \ep t^{-H} dt\\
\notag = & \frac{c_1}{1 - H} (1 - (1/2)^{1 - H}).
\end{align} 
 Therefore, $\E[ L(0, [1/2,1]) ] > 0$ and thus $\PP(Z_0^0 > \ep) > 0$ for some $\ep > 0$. 

The rest of the proof is based on the following two facts:
\begin{enumerate}
\item For every $\ep > 0$ small enough, there exists $a \in \R^+$, such that:
\begin{align} 
\notag 0 < \PP(Y_0^0 > \ep) \leq 2 \PP(\wt{Y}_0^a > 0). 
\end{align} 
\item  For any $a, b > 0$, we have 
\begin{align}
\notag \PP(\wt{F}_\infty^b = \infty) \geq \PP (\wt{Y}_0^a). 
\end{align}
\end{enumerate} 
The statements above  correspond to Lemmas 8 and 9 in~\cite{daw-2020} and the proofs are identical as long as the following holds:
\begin{align}
\notag \PP \left( \sup_{x \in [-1, 1]/\{0\} } \frac{ \left|L \left(0, \left[ \frac{1}{2}, 1\right]\right) - L \left(x, \left[ \frac{1}{2}, 1\right]\right)\right| }{|x|^\beta} < \infty \right) = 1, 
\end{align} 
where $\beta \in \left( 0, \frac{1}{2} \left( \frac{1}{H} - 1\right) \right)$. In~\cite{daw-2020}, this property corresponds to Lemma 5 which is originally due to Geman in Horowitz~\cite[Theorem 26.1]{geman-horowitz-1980}. For the Rosenblatt case, the above is established in Proposition~\ref{prop:sup_holder_lt}.
\end{proof}

Using the result of Lemma~\ref{lem:FZ} we can establish that  $\PP(\wt{F}_\infty^b = \infty) =1$ if we can apply Blumental's 0-1 law. This is possible since
\begin{align}
\label{eq:event_germ} \lcl \wt{F}_\infty^b = \infty \rcl \in \bigcap_{M \geq 1} \sigma \lcl W_u : u < 2^{-(M-1)} \rcl,
\end{align}
where $(W_t)_{t \geq 0}$ is the standard Brownian motion. Indeed, the time inverted process $\wt{Z}_t = t^{2H} Z_{1/t}, t > 0$ is distributed as the Rosenblatt process (see~Proposition~\ref{prop:inverse_time}). Then, using the representation~\eqref{eq:local_rep}, the local time $L^x(S_n)$ is $\sigma \lcl \wt{Z}_u : u \leq 2^{- (n-1)} \rcl$-measurable. Moreover, 

\begin{align}
\notag \sigma \lcl \wt{Y}_n^b : n \geq M \rcl \subset \sigma \lcl \wt{Z}_u  : u \leq 2^{- (M - 1)} \rcl, 
\end{align} 
for $M \geq 1$ and thus
\begin{align}
\label{eq:event_germ_r}  \lcl \wt{F}_\infty^b = \infty \rcl \in \bigcap_{M \geq 1} \sigma \lcl \wt{Z}_u : u < 2^{-(M-1)} \rcl.
\end{align} 

At this point by~\eqref{filtrationB} (with $\tilde{Z}$ instead of $Z$), there exists standard Brownian motion $(W_t)_{t \geq 0}$ such that $\sigma \lcl \tilde{Z}_u : u \leq t \rcl \subset \sigma \lcl W_u : u \leq t \rcl$. This fact combined with~\eqref{eq:event_germ_r} establishes~\eqref{eq:event_germ}. Then using~\eqref{eq:event_germ} and the fact that $\PP(\wt{F}_\infty^b = \infty) > 0$ for all $b > 0$, one can apply Blumental's 0-1 law and thus gets that $\PP(\wt{F}_\infty^b = \infty) =1$, for all $b > 0$.

Finally, for every $b > 0$, 
\begin{align}
\notag & \PP (\forall x \in [-b, b ] : F_\infty^x = \infty)  = \PP \left( \inf_{x \in [-b, b]} F_\infty^x = \infty\right) = \PP \left( \inf_{x \in [-b, b]} \sum_{n \geq 1} Y_n^x = \infty\right) \\
\notag \geq & \quad \PP \left( \sum_{n \geq 1} \inf_{x \in [-b, b ]} Y_n^x = \infty \right) = \PP ( \wt{F}_\infty^b = \infty) = 1. 
\end{align}
Therefore, 
\begin{align}
\notag \PP (\forall x \in \R : F_\infty^x = \infty) = \lim_{b \to \infty} \PP( \forall x \in [-b, b] , F_\infty^x = \infty ) = 1, 
\end{align}
and~\eqref{eq:uniform_as} is established.
  
\end{proof}

Next, we establish \eqref{eq:levelset_packing}- the result regarding Packing dimension. Recall that $\dpack{\ls} \leq \dubox{\ls}=1-H$. It is enough to show that $\dpack{\ls} \geq 1-H$, which is the aim of the next section.

\section{Sojourn times}\label{sec:sojourn_set}

This section is dedicated to the proof of Theorem~\ref{thm:sojourn}.  We first establish~\eqref{eq:densities}. Recall the definitions of logarithmic and pixel densities.  For $E \subset \R^+$, the logarithmic density of $E$ is given by
\begin{align}
\notag	\dlog{E} \coloneqq  \limsup_{n \to \infty} \frac{\log_2 \leb{E \cap [ 1, 2^n]} }{n},
\end{align} 
where `Leb' is the one-dimensional Lebesgue measure. 

Let $ \pix{E} \coloneqq \lcl n \in N : \text{dist}(n, E) \leq 1 \rcl$. Then, the pixel density of $E$ is
\begin{align}
	\notag  \dpix{E} \coloneqq  \limsup_{n \to \infty} \frac{\log_2 \# \pix{E \cap [ 1, 2^n]} }{n}.
\end{align}

The two quantities are closely related, see~\cite{khoshnevisan-xiao-2017}:
\begin{align}
\label{eq:standard} \dlog{E} \leq \dpix{E}.
\end{align}

We want to show that for $\gamma \in [0,H)$, $\dpix{\eg} = \dlog{\eg} = \gamma + 1 - H$, almost surely. Our strategy is then to establish  that $\dpix{\eg} \leq \gamma + 1 - H$ and $\dlog{\eg} \geq \gamma + 1 - H$, almost surely.

\subsection{Upper bound for $\dpix{\eg}$}
Our goal is to obtain an upper bound for $\# \pix{\eg} \cap [1, 2^n]$ that holds with probability $1$ for all large $n$. We first study the expectation
\begin{align}
\notag \E[\pix{\eg} \cap [1, 2^n]] = & \sum_{m=1}^{2^n} \PP \lp \exists s \in [ m - 1, m + 1], |Z_s| \leq s^\gamma \rp \\
\notag = & \sum_{m = 1}^{2^n} \PP \lp \exists s \in \lc 1 - \frac{1}{m}, 1 + \frac{1}{m} \rc, |Z_s| \leq s^\gamma m ^{\gamma - H} \rp\\
\notag = & \sum_{m = 1}^{2^n} \PP \lp \exists s \in \lc 1 - \frac{1}{m}, 1  \rc, |Z_s| \leq s^\gamma m ^{\gamma - H} \rp \\ \notag & + \PP \lp \exists s \in \lc 1, 1 + \frac{1}{m} \rc, |Z_s| \leq s^\gamma m ^{\gamma - H} \rp\\
\label{eq:aem} \leq & \sum_{m =1}^{2^n} (A_{1/m}^- + A_{1/m}^+),
\end{align}
where 
\begin{align}
\notag A_{\ep}^- \coloneqq & \PP (\exists s \in [1 - \ep, 1], |Z_s| \leq \ep^{H - \gamma}), \\
\notag A_{\ep}^+ \coloneqq & \PP (\exists s \in [1 , 1 + \ep], |Z_s| \leq 2\ep^{H - \gamma}).
\end{align}

\begin{lemma}\label{lem:max_As} There is a universal constant $c > 0$, such that, for every $\ep$ small enough, 
	\begin{align}
	\label{eq:max_As} \max(A_\ep^-, A_\ep^+) \leq c \ep^{H - \gamma}.
	\end{align}
\end{lemma}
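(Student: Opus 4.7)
The plan is to treat both $A_\ep^-$ and $A_\ep^+$ by the same scheme, focusing on $A_\ep^-$ since the other is symmetric. The central observation is that if some $s \in [1-\ep, 1]$ satisfies $|Z_s| \leq \ep^{H-\gamma}$, then either $Z_1$ is itself of comparable smallness or the path oscillates by at least $\ep^{H-\gamma}$ over the short interval $[1-\ep, 1]$. Concretely, for any $K > 1$,
\begin{align*}
A_\ep^- \leq \PP\bigl(|Z_1| \leq K\ep^{H-\gamma}\bigr) + \PP\Bigl(\sup_{s \in [1-\ep, 1]} |Z_s - Z_1| \geq (K-1)\ep^{H-\gamma}\Bigr).
\end{align*}

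For the first term I would invoke Proposition~\ref{prop:rosen_density}\ref{prop:unimodal}: the density $f$ of $Z_1$ is continuous and hence bounded on compact subsets of $\R$ by some constant $f_\infty$, giving $\PP(|Z_1| \leq K\ep^{H-\gamma}) \leq 2 K f_\infty \,\ep^{H-\gamma}$. For the oscillation term, Proposition~\ref{prop:sup-tail} applied with $s = 1$, $h = \ep$, and $u = (K-1)\ep^{H-\gamma}$ yields
\begin{align*}
\PP\Bigl(\sup_{s \in [1-\ep, 1]}|Z_s - Z_1| \geq (K-1)\ep^{H-\gamma}\Bigr) \leq C \exp\!\Bigl(- \frac{K-1}{c_1}\, \ep^{-\gamma} \Bigr).
\end{align*}
For $\gamma > 0$ this is super-polynomially small in $\ep$, hence $o(\ep^{H-\gamma})$; fixing $K = 2$ then produces the claimed bound on $A_\ep^-$. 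The estimate for $A_\ep^+$ follows by the same argument applied on the interval $[1, 1+\ep]$, with the extra factor of $2$ in the threshold on $|Z_s|$ altering only constants.

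The main obstacle is the boundary case $\gamma = 0$, where $\ep^{-\gamma} = 1$ and the exponential bound ceases to decay. In that regime one can still close the argument at the cost of taking $K$ of order $|\log \ep|$, which produces a harmless logarithmic factor when the bound is summed in~\eqref{eq:aem}; alternatively, a refinement exploiting the joint continuous density of $(Z_{1-\ep}, Z_1)$ supplied by Proposition~\ref{prop:rosen_density}\ref{prop:cont_density} would recover the clean $c\ep^H$ estimate. In either case the downstream upper bound $\dpix{\eg} \leq \gamma + 1 - H$ is unaffected.
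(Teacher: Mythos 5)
Your proof is correct and follows essentially the same route as the paper: split $A_\ep^-$ into the event $\{|Z_1|\leq K\ep^{H-\gamma}\}$, bounded via the continuous density of $Z_1$ from Proposition~\ref{prop:rosen_density}, plus an oscillation event bounded by Proposition~\ref{prop:sup-tail} (the paper takes $K=2$ directly). Your remark about the boundary case $\gamma=0$ is well taken --- the paper's own proof asserts $\exp(-c_1\ep^{-\gamma})=O(\ep^\delta)$, which fails when $\gamma=0$, so your fix (letting $K$ grow slowly in $\ep$, at the cost of a harmless subpolynomial factor) is a genuine improvement over the argument as written.
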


\begin{proof} Consider $A_\ep^-$ first. We have
	\begin{align}
	\notag A_\ep^- \leq & \quad \PP (\exists s \in [1- \ep, 1],  |Z_s| \leq \ep^{H - \gamma}, |Z_1| \leq 2 \ep^{H - \gamma}) \\ 
	\notag & \quad \quad + \PP (\exists s \in [1- \ep, 1],  |Z_s| \leq \ep^{H - \gamma}, |Z_1| \geq 2 \ep^{H - \gamma}) \\ 
	\label{eq:aep_1} \leq &\quad \PP (|Z_1| \leq 2 \ep^{H - \gamma}) + \PP(\exists s \in [1- \ep, 1], |Z_s - Z_1 | \geq \ep^{H - \gamma}).
	\end{align}
	
To bound the first term on the right-hand side above, we use Proposition~\ref{prop:rosen_density}\ref{prop:unimodal}, i.e,  the density function $f$ of $Z_1$  is continuous and $f(0) > 0$. Then one can show, for instance, that for $\ep > 0$ small enough,
\begin{align}
\label{eq:aep_2}  \PP (|Z_1| \leq 2 \ep^{H - \gamma}) \leq 4f(0) \ep^{H -\gamma}.
\end{align}

	We are left to  study the term $\PP(\exists s \in [1- \ep, 1], |Z_s - Z_1 | \geq \ep^{H - \gamma})$. Write
\begin{align}
\notag &  \PP(\exists s \in [1- \ep, 1], |Z_s - Z_1 | \geq \ep^{H - \gamma}) \\
\notag \leq & \quad \PP(\sup_{s \in [1- \ep, 1+\ep]} |Z_s - Z_1 | \geq \ep^{H - \gamma}) \\
\label{eq:aep_3} \leq & \quad C \exp \left( - c_1 \ep^{- \gamma} \right),
\end{align}
where the last inequality follows from Proposition~\ref{prop:sup-tail} and $C, c_1> 0$ are constants depending only on $H$. Note that $  \exp( - c_1 \ep^{- \gamma}) = O(\ep^\delta)$, for any $\delta > 0$ if any $\ep$ is small enough.

Finally, for $\ep$ small enough, combining~\eqref{eq:aep_3} and~\eqref{eq:aep_2} in~\eqref{eq:aep_1} yields the bound of~\eqref{eq:max_As} for $A_\ep^-$. 
	
	Same arguments as above can be applied to $A_\ep^+$ to get an equivalent bound and establish~\eqref{eq:max_As}.
\end{proof}

Next, applying Lemma~\ref{lem:max_As} in~\eqref{eq:aem} yields, for some absolute constant $C > 0$,
\begin{align}
\notag \E[\#\pix{ \eg} \cap [1, 2^n] ]  \leq  2 C \sum_{m =1}^{2^n} m^{\gamma - H} = O\lp 2^{n (\gamma + 1 - H)}\rp.
\end{align}
Choose $\rho > \gamma + 1 - H$. Then, 
\begin{align}
\notag \sum_{n \geq 1} \PP \lp \# \pix{ \eg} \cap [1, 2^n]  > 2^{n \rho} \rp \leq C \sum_{n \geq 1} \frac{2^{n(1 + \gamma - h)}}{2^{n \rho}} < \infty.  
\end{align}
By the Borel-Cantelli lemma, with probability one, 
\begin{align}
\notag \# \pix{\eg} \cap [1, 2^n]  \leq 2^{n \rho},
\end{align}
for every large enough $n$. Hence, $\dpix{\eg}\leq \rho$. Letting $\rho \downarrow \gamma + 1 - H$ yields $\dpix{\eg} \leq \gamma + 1 - H$.

\subsection{Lower bound for $\dlog{\eg}$}

Introduce  
\begin{align}
\notag S_\gamma([t_1,t_2]) = \leb{\{t_1 \leq s \leq t_2: |Z_s| \leq s^\gamma\}}, \mbox{ for all } 0 \leq t_1 \leq t_2.
\end{align}

We will prove that for infinitely many integers $n$, $S_\gamma([0,2^n]) \geq \frac{c}{2} 2^{n ( \gamma + 1 - H) }$, for any $c \in (0,1)$. This implies that $\dlog{\eg} \geq \gamma + 1 - H$ almost surely. Then using~\eqref{eq:standard}, we also obtain $\dpix{\eg} \leq \gamma + 1 - H$ and the proof of~\eqref{eq:densities} is completed.

First we show that for any $ c \in (0, 1)$, there is a constant $c' > 0$ such that
\begin{align}
\label{eq:bound_sg} \PP (S_\gamma( [0,2^n]) \geq c 2^{n(1 + \gamma - H)} ) \geq c'. 
\end{align}

By Paley-Zygmund inequality, for any $ c \in (0, 1)$, we have
\begin{align}
\label{eq:pz_ineq} \PP (S_\gamma( [0,2^n]) \geq c 2^{n(1 + \gamma - H)} ) \geq (1 - c) \frac{\E [ S_\gamma([0,2^n])]^2 }{\E [ S_\gamma([0,2^n])^2 ] }. 
\end{align}

The numerator can be rewritten as:
\begin{align}
\notag \E [S_\gamma([0,t]) ] = \int_0^t \PP (|Z_s | \leq s^\gamma) ds = \int_0^t \PP (|Z_1| \leq s^{\gamma - H} ) ds.
\end{align}

Now, we establish a lower bound for $\PP (|Z_1| \leq s^{\gamma - H} )$. Apply Proposition~\ref{prop:rosen_density}\ref{prop:unimodal} there is a constant $\alpha > 0$ such that for $s$ large enough, the density function of $Z_1$ is bounded below by $\alpha$ in $[-s^{\gamma - H}, s^{\gamma - H}]$. Therefore,
\begin{align}
\label{eq:bound_1st_moment} \PP (|Z_1| \leq s^{\gamma - H} )  \geq 2\alpha s^{\gamma - H} \quad \text{ and thus } \quad \E[S_\gamma([0,t])] \geq 2\alpha t^{1 + \gamma - H}.
\end{align}

We bound the second moment from above:
\begin{align}
\notag \E[S_\gamma([0,t])^2 ] = & \int \int_{[0, t]^2} \PP ( |Z_u | \leq u^\gamma, |Z_v| \leq v^\gamma) du dv \\
\notag = & \,t^2 \int \int_{[0, 1]^2} \PP \lp |Z_u| \leq u^\gamma t^{\gamma - H}, |Z_v| \leq v^\gamma t^{\gamma - H} \rp  du dv.
\end{align}

By Proposition~\ref{prop:rosen_density}\ref{prop:cont_density}, the density function $g_{u,v}$ of $(Z_u, Z_v)$ is continuous and tends to $0$ as $\lvert x \rvert \rightarrow \infty$. Therefore, 
\begin{align}
\label{eq:bound_2nd_moment}  \E[ S_\gamma([0,t])^2] & \leq \, t^2 \int \int_{[0, 1]^2} du dv \int \int_{\R^2} g_{u,v}(x,y) \, \mathds{1}_{ \begin{pmatrix}
		|x| \leq u^\gamma t^{\gamma - H} \\ |y| \leq v^\gamma t^{\gamma - H}
	\end{pmatrix}}dx dy \\
& \leq  C t^{2 + 2\gamma - 2H}. 
\end{align} 
Applying~\eqref{eq:bound_1st_moment} and~\eqref{eq:bound_2nd_moment} in~\eqref{eq:pz_ineq} yields~\eqref{eq:bound_sg}. 
Now, define the event 
\begin{align*}
	A_{n,\gamma} \coloneqq \left\{S_\gamma\lp \left[ \frac{c}{2} 2^{n(1 + \gamma - H)},2^n\right]\rp \geq \frac{c}{2} 2^{n(1 + \gamma - H)}\right\}. 
\end{align*}
By~\eqref{eq:bound_sg}, it is easy to see that $\PP\left(A_{n,\gamma}\right) \geq c' >0$. Moreover, by the definition of $A_{n,\gamma}$, one has $A_{n,\gamma} \subset \left\{S_\gamma\lp \left[ 0,2^n\right]\rp \geq \frac{c}{2} 2^{n(1 + \gamma - H)}\right\} $. Then it is enough to prove that $A_{n,\gamma}$ happens infinitely often which give us that $S_\gamma\lp \left[ 0,2^n\right]\rp \geq \frac{c}{2} 2^{n(1 + \gamma - H)}$ for infinitly many $n$.  To this end, let $A_\gamma$ be the event that $A_{n,\gamma}$ happens infinitely often. Recall that for any sequence of events $(A_i)_{i \geq 1}$, one has $\lim_{n \to \infty} \PP( \cup_{i \geq n} A_i)  = \PP (A_i \text{ i. o })$. In other words, one has  
\begin{align}
	\label{defAG}
	A_\gamma = \bigcap_{M \geq 1} \bigcup_{n \geq M} A_{n,\gamma}.
\end{align}
We know that $\PP\lp A_\gamma \rp (\geq c')$ is strictly positive. It remains to prove that it is in fact equal to 1. As in Section~\ref{LowerBdDhausLx}, such a conclusion will follow by using that the time inverted process $\tilde{Z}_t = t^{2H} Z_{1/t}$ is distributed as the Rosenblatt process (see Proposition~\ref{prop:inverse_time}). Now let $\tilde{S}_\gamma$ (resp. $\tilde{A}_{n,\gamma}$, $\tilde{A}_{\gamma}$) be the event analogous to $S_\gamma$ (resp. $A_{n,\gamma}$, $A_\gamma$), but associated to $\tilde{Z}$ instead of $Z$. So for any fixed integer $n \geq 0$, we have
\begin{align*}
	\tilde{S}_\gamma\lp \left[ \frac{c}{2} 2^{n(1 + \gamma - H)},2^n\right]\rp = \mbox{Leb} \lp \left\{ \frac{c}{2} 2^{n(1 + \gamma - H)} \leq s \leq 2^n: |t^{2H}Z_{1/s}| \leq s^\gamma \right\} \rp,
\end{align*}
which implies in return that $\tilde{A}_{n,\gamma} \in \sigma \left\{ Z_u \, : \, u \leq 2^{-n(1 + \gamma - H)} \right\}$. As a consequence, for all $M \geq 0$, one has 
\begin{align*}
	\left\{\tilde{A}_{n,\gamma} \, : \, n \geq M\right\} \in \sigma\left\{Z_u \, : \, u \leq 2^{-M(1 + \gamma - H)}\right\}.
\end{align*}
Recalling definition~\ref{defAG} of $A_\gamma$, we obtain that   
\begin{align*}
	\tilde{A}_{\gamma} \in \bigcap_{M \geq 1}\sigma \left\{\title{A}_{n,\gamma} \, : \, n \geq M\right\}.
\end{align*}
Using \eqref{filtrationB}, we deduce that 
\begin{align}
\notag \tilde{A}_{\gamma} \in \bigcap_{M \geq 1} \sigma ( B_u : u \leq 2^{-M(1 + \gamma - H)}), 
\end{align}
where $(B_t)_{t \geq 0}$ is the Brownian motion. Therefore, $\tilde{A}_\gamma$ is a tail event and $\PP(\tilde{A}_\gamma) = 0$ or $1$ by the Blumenthal $0-1$ law. Obviously, as $Z$ and $\tilde{Z}$ have the same distribution, then $\PP(\tilde{A}_\gamma)= \PP (A_\gamma) \geq c'>0$ and then $\PP(\tilde{A}_\gamma)= \PP (A_\gamma)= 1$ as desired.

\subsection{Upper bound for $\dhaus{\eg}$}\label{sec:dhaus_upper}
We now turn to the proof of~\eqref{eq:haus_as}. Following our discussion in Section~\ref{sec:macro_haus}, and in particular the relation~\eqref{eq:macro_haus_incl} between $\dhaus \eg$ and $\dhaus \ls$, it is enough to show~\eqref{eq:haus_upper}, i.e., for every $0 \leq \gamma < H$, 
\begin{align}
\notag \dhaus \eg \leq 1 - H, \text{ a.s.}
\end{align}

We follow the technique in \cite{Nourdin-Peccati-Seuret-2018}. Let us fix $0 \leq \gamma < H$, as well as $\eta > 0$ (as small as necessary). We
are going to prove that  $\dhaus{\eg} \leq  1 - H + \eta$. Letting $\eta$ tend to zero will then give the result.
Fix $\rho > 1-H + \eta$, our aim is to prove that $\dhaus{\eg} \leq \rho$. To this end, consider for every integer $n \geq 1$ and $i \in \left\{0, ...,\left\lfloor \dfrac{2^{n-1}}{2^{n\frac{\gamma}{H}}}\right\rfloor\right\}$ the intervals 
\begin{align*}
	I_{n,i} = [t_{n,i}, t_{n,i+1}) \mbox{ with } t_{n,i}= 2^{n-1} +i2^{n \frac{\gamma}{H}}.
\end{align*}
And the associated event
\begin{align*}
\mathcal{E}_{n,i} = \left\{ \exists t \in I_{n,i}\, :\,  |Z_t| \leq t^\gamma \right\}.
\end{align*}
Denote $\epsilon_{n,i} =2^{n \frac{\gamma}{H}}/t_{n,i}$ , so that $I_{n,i} = [t_{n,i}, t_{n,i}(1+\epsilon_{n,i}))$, and observe that the ratio between any two of the quantities $2^{n \left(\frac{\gamma}{H}-1\right)}, \, \epsilon_{n,i},$ and $t_{n,i}^{\frac{\gamma}{H}-1}$ are bounded uniformly with respect to $n$ and $i$. By self-similarity, we have that, when $n $ becomes large, 
\begin{align*}
\mathbb{P}\left(\mathcal{E}_{n,i}\right) 
& = \mathbb{P}\left(\exists \, t \in I_{n,i}\, : \,  |Z_t| \leq t^\gamma\right) \\
& = \mathbb{P}\left(\exists \, s \in \left[1,1+\epsilon_{n,i}\right]: |Z_{s. t_{n,i}}| \leq \left(s. t_{n,i}\right)^\gamma\right) \\
& = \mathbb{P}\left(\exists \, s \in \left[1,1+\epsilon_{n,i}\right]: |Z_{s}| \leq t_{n,i}^{\gamma-H}.s^\gamma\right) \\
& = \mathbb{P}\left(\exists \, s \in \left[1,1+\epsilon_{n,i}\right]: |Z_{s}| \leq 2t_{n,i}^{\gamma-H}\right) \\
& = \mathbb{P}\left(\exists \, s \in \left[1,1+\epsilon_{n,i}\right]: |Z_{s}| \leq c\epsilon_{n,i}^{H}\right) \\
& = \mathbb{P}\left(\exists \, s \in \left[1,1+\epsilon_{n,i}\right]: |Z_{s}| \leq \epsilon_{n,i}^{H-\eta}\right). 
\end{align*}
The last estimate holds because $\eta$ is a small positive real number and
$\epsilon_{n,i}$ tends to zero when $n$ becomes large. By Lemma \ref{lem:max_As}, we deduce that $\mathbb{P}\left(\mathcal{E}_{n,i}\right) \leq c \epsilon_{n,i}^{H-\eta}$ and so 
\begin{align*}
	\mathbb{P}\left(\mathcal{E}_{n,i}\right) \leq c 2^{n(\gamma-H)\frac{H-\eta}{H}}.
\end{align*}
Now observe that $\mathcal{E}_{n,i}$ is realized if and only if $\eg \cap I_{n,i} \neq \phi$. So, using the intervals $I_{n,i}$ as a covering of $\eg \cap S_n$, we obtain that
\begin{align*}
	\mathbb{E}\left[\nu_{\rho}^{n}(\eg) \right] \leq 
	& \mathbb{E} \left(\sum_{0}^{\lfloor 2^{n-1-n\frac{\gamma}{H}}\rfloor} \left(\dfrac{\leb{I_{n,i}}}{2^n}\right)^\rho \mathds{1}_{\mathcal{E}_{n,i}}\right) \\ \leq 
	& 2^{\rho n\left(\frac{\gamma}{H}-1\right)} \sum_{0}^{\lfloor 2^{n-1-n\frac{\gamma}{H}}\rfloor} \mathbb{P}\left(\mathcal{E}_{n,i}\right)  \\ \leq
	& c 2^{n\frac{H-\gamma}{H}(1-H+\eta -\rho)}.
\end{align*}
Thus, the Fubini Theorem entails $\mathbb{E}\left[\sum_{n=1}^{\infty}\nu_{\rho}^{n}(\eg) \right]< + \infty $ as soon as $\rho > 1 - H + \eta$. This implies that for such $\rho$’s, the sum $\sum_{n=1}^{\infty}\nu_{\rho}^{n}(\eg)$ is finite almost surely. In particular, $\dhaus{\eg}\leq \rho$, for every $\rho > 1 - H + \eta$.
Since such a relation holds for an arbitrary (small) $\rho > 0$, we deduce~\eqref{eq:haus_upper} as desired.

\bibliographystyle{plain}
\bibliography{lib_pr}

\end{document}